\newtheorem{theorem}{Theorem}[section]
\newtheorem{proposition}[theorem]{Proposition}
\newtheorem{lemma}[theorem]{Lemma}
\newtheorem{remark}{Remark}[section]
\numberwithin{equation}{section}
\newcommand{\todaye}{\the\year/\the\month/\the\day}
\newcommand{\RR}{\mathbb{R}}
\newcommand{\dom}{{\rm dom\,}}
\newcommand{\argmax}{\arg \max}
\title{Multiple Exchange Property for 
\\
M$\sp{\natural}$-concave Functions and Valuated Matroids}
\author{Kazuo Murota
\\
School of Business Administration, Tokyo Metropolitan University,
\\
Tokyo 192-0397, Japan; 
murota@tmu.ac.jp
}
\date{August 25, 2016 / April 11, 2017}
\begin{document}

\maketitle

\begin{abstract}
The multiple exchange property for matroid bases
is generalized for valuated matroids and
M$\sp{\natural}$-concave set functions.
The proof is based on the Fenchel-type duality theorem in discrete convex analysis.
The present result has an implication in economics:
The strong no complementarities (SNC) condition of 
Gul and Stacchetti   
is in fact equivalent to the gross substitutes (GS) condition of 
Kelso and Crawford.
\end{abstract}

\quad 
{\bf Keywords}: discrete convex analysis; 
matroid; exchange property; combinatorial optimization

\section{Introduction}
\label{SCintro}

Discrete convex analysis \cite{Fuj05,Mdcasiam} offers
a general framework of discrete optimization,
combining the ideas from submodular/matroid theory and convex analysis.
It has found applications in many different areas \cite{Mbonn09}, 
including systems analysis \cite{Mspr2000},
inventory theory in operations research \cite{SCB14},
and mathematical economics and game theory
\cite{FujTam07,ISST15,KTY14,MTcompeq03}.
The interaction between discrete convex analysis and mathematical economics
was initiated by \cite{DKM01} (see also \cite[Chapter 11]{Mdcasiam})
and accelerated by the crucial observation of Fujishige--Yang \cite{FY03}
that M$\sp{\natural}$-concavity 
(see Section \ref{SCresult} for the definition)
is equivalent to the gross substitutability (GS) of Kelso--Crawford \cite{KC82};
\cite{ST15jorsj} is a detailed recent survey on the relation between
M$\sp{\natural}$-concavity and (GS).

In matroid theory, 
one of the classical results \cite{Bry73,Gre73,Woo74} says that the basis family
of a matroid enjoys the multiple exchange property:
For two bases $X$ and $Y$ in a matroid and a subset $I \subseteq X \setminus Y$,
there exists a subset $J \subseteq Y \setminus X$ such that
$(X \setminus I) \cup J$ and $(Y \setminus J) \cup I$ are both bases.
As a quantitative version of this, we may naturally consider
the multiple exchange property for a set function $f$:
For two subsets $X, Y$ and a subset $I \subseteq X \setminus Y$,
there exists $J \subseteq Y \setminus X$ such that
$f( X) + f( Y )   \leq  f((X \setminus I) \cup J) +f((Y \setminus J) \cup I)$.

The objective of this paper is to 
establish this multiple exchange property
for M$\sp{\natural}$-concave functions and valuated matroids.
The results are described in Section \ref{SCresult} 
and the proof,
based on the Fenchel-type duality theorem in discrete convex analysis,
is given in Section \ref{SCproof}. 
Our result settles an old question in economics:
The strong no complementarities (SNC) condition of 
Gul and Stacchetti \cite{GS99}
is in fact equivalent to the gross substitutes condition.
This is discussed in Section \ref{SCecoimpl}.
Section \ref{SCproofconverse} offers a proof 
to the fact that the multiple exchange property characterizes 
M$\sp{\natural}$-concavity.

\section{Results}
\label{SCresult}

Let $N$ be a finite set, say,
$N = \{ 1,2,\ldots, n \}$.
For a function
$f: 2\sp{N} \to \RR \cup \{ -\infty \}$,
$\dom f$ denotes the effective domain of $f$, i.e.,
$\dom f = \{ X \mid f(X) > -\infty \}$.

A function
$f: 2\sp{N} \to \RR \cup \{ -\infty \}$ 
with $\dom f \not= \emptyset$
is called {\em M$\sp{\natural}$-concave}
\cite{Mdcasiam, MS99gp}, if,
for any $X, Y \in \dom f$ 
and $i \in X \setminus Y$,
we have  (i)
$X - i \in \dom f$, $ Y + i \in \dom f$ and
\begin{equation}  \label{mconcav1}
f( X) + f( Y ) \leq f( X - i ) + f( Y + i ),
\end{equation}
or (ii) there exists some $j \in Y \setminus X$ such that
$X - i +j \in \dom f$, $ Y + i -j  \in \dom f$ and
\begin{equation}  \label{mconcav2}
f( X) + f( Y ) \leq 
 f( X - i + j) + f( Y + i -j).
\end{equation}
Here we use short-hand notations
$X - i = X \setminus  \{ i \}$,
$Y + i = Y \cup \{ i \}$,
$X - i + j =(X \setminus  \{ i \}) \cup \{ j \}$,
and
$Y + i - j =(Y \cup \{ i \}) \setminus \{ j \}$.
This property
is referred to as the {\em exchange property}.
The exchange property 
can be expressed more compactly as:
\begin{quote}
\mbox{\bf (M$\sp{\natural}$-EXC)} [Exchange property] \ 
For any $X, Y \subseteq N$ 
and $i \in X \setminus Y$, we have
\begin{align}
f( X) + f( Y )   &\leq 
   \max\left[ f( X - i ) + f( Y + i ),  
 \right.
\notag \\
    &    \left.        \phantom{\max_{j \in Y \setminus X}} 
 \max_{j \in Y \setminus X}  \{ f( X - i + j) + f( Y + i -j) \}\right] ,
\label{mnatconcavexc2}
\end{align}
\end{quote}
where
$(-\infty) + a = a + (-\infty) = (-\infty) + (-\infty)  = -\infty$ for $a \in \RR$,
$-\infty \leq -\infty$, and
a maximum taken over an empty set
is defined to be $-\infty$.

In this paper we are concerned with the {\em multiple exchange property}:
\begin{quote}
\mbox{\bf (M$\sp{\natural}$-EXC$_{\rm\bf m}$)} [Multiple exchange property]%
\footnote{
In economics \cite{GS99}, the multiple exchange property (M$\sp{\natural}$-EXC$_{\rm m}$) 
is called 
``strong no complementarities property (SNC)''.
See Section \ref{SCecoimpl}.
}
\ 
For any $X, Y \subseteq N$ and $I \subseteq X \setminus Y$,
there exists $J \subseteq Y \setminus X$ such that
$ 
f( X) + f( Y )   \leq  f((X \setminus I) \cup J) +f((Y \setminus J) \cup I) 
$, 
i.e., 
\begin{align}
f( X) + f( Y )   \leq 
 \max_{J \subseteq Y \setminus X} 
 \{  f((X \setminus I) \cup J) +f((Y \setminus J) \cup I)  \}.
\label{mnatconcavexcmult}
\end{align}
\end{quote}

\begin{theorem} \label{THmultexchmnat}
An M$\sp{\natural}$-concave function
$f: 2\sp{N} \to \RR \cup \{ -\infty \}$ 
has the multiple exchange property 
{\rm (M$\sp{\natural}$-EXC$_{\rm m}$)}.
\end{theorem}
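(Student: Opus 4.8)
The plan is to derive the multiple exchange property from the Fenchel-type duality theorem of discrete convex analysis, after two standard reductions. First I would normalize: since the effective domain of an M$^\natural$-concave function is an M$^\natural$-convex set, the configurations in which a relevant set falls outside $\dom f$ are either vacuous or already covered by the single exchange property, so I may assume that all the values occurring are finite. Next, M$^\natural$-concavity is preserved by restriction to a subcube and by contraction; contracting $f$ by the elements of $X\cap Y$ and restricting it to $2^{X\cup Y}$ therefore reduces the statement to the case where $X$ and $Y$ are \emph{disjoint}, $I\subseteq X$, and one must produce $J\subseteq Y$ with $f(X)+f(Y)\le f((X\setminus I)\cup J)+f((Y\setminus J)\cup I)$.

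Put $I':=X\setminus I$ and, for $J\subseteq Y$, set $f_1(J):=f((X\setminus I)\cup J)$ and $f_2(J):=-f((Y\setminus J)\cup I)$. Then $f_1$ is M$^\natural$-concave (a contraction-restriction of $f$), while $f_2$, being the negative of a set-complement of a restriction of $f$, is M$^\natural$-convex; both live on $2^Y$. The target inequality is exactly $\max_{J\subseteq Y}\{f_1(J)-f_2(J)\}\ge f(X)+f(Y)$. By the Fenchel-type duality theorem, $\max_{J}\{f_1(J)-f_2(J)\}=\min_{p}\{f_2^{\bullet}(p)-f_1^{\circ}(p)\}$ with $f_1^{\circ}$, $f_2^{\bullet}$ the concave and convex conjugates, so it suffices to prove $f_2^{\bullet}(p)-f_1^{\circ}(p)\ge f(X)+f(Y)$ for \emph{every} potential $p$.

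Expanding the two conjugates, replacing $J$ by its complement $Y\setminus J$ inside one of the maxima, and absorbing $p$ into $f$ (a linear function may be subtracted from an M$^\natural$-concave function without destroying M$^\natural$-concavity), the inequality to be checked for each $p$ collapses to the following \emph{key inequality}: for every M$^\natural$-concave $g\colon 2^{D}\to\RR\cup\{-\infty\}$ with $D=I\sqcup I'\sqcup Y$,
\[
\max_{K\subseteq Y}g(I\cup K)+\max_{J\subseteq Y}g(I'\cup J)\ \ge\ g(I\cup I')+g(Y).
\]
Equivalently, with $\Delta(A):=g(A)+g(D\setminus A)$, the maximum of $\Delta$ over the subcube $\{A:A\cap(I\cup I')=I\}$ is at least $\Delta(I\cup I')$; the $\{0,-\infty\}$-valued (g-matroid) instance of this key inequality is precisely the classical matroid multiple exchange property, so one should think of the key inequality as its valuated form.

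Proving the key inequality is the crux. For $|I|=1$ (and, by the symmetry of the inequality in $I\leftrightarrow I'$, for $|I'|=1$) it drops out directly from the single exchange property (M$^\natural$-EXC) applied to the pair $(I\cup I',Y)$. For the general case I would induct on $|I|+|I'|$: pick $i\in I$, apply (M$^\natural$-EXC) to $(I\cup I',Y)$ at $i$, and in each of the two outcomes invoke the inductive key inequality for a smaller index set. The step I expect to be hardest is that the element $i$ tends to migrate to the ``wrong side'' of the partition, so the bounds supplied by the inductive hypothesis do not automatically reduce to $\max_{K\subseteq Y}g(I\cup K)$ and $\max_{J\subseteq Y}g(I'\cup J)$; closing this gap needs a sharper argument — for instance choosing the maximizers over the two faces to be extremal among all maximizers, or re-using the optimality (complementary-slackness) conditions of the Fenchel duality — rather than a naive recursion. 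Submodularity of M$^\natural$-concave functions is a useful bookkeeping device here, but by itself it only relates the quantities to $g(D)+g(\emptyset)$ and so is not enough.
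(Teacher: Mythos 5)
Your setup coincides with the first half of the paper's proof: the paper likewise writes the right-hand side of (M$\sp{\natural}$-EXC$_{\rm m}$) as $\max_{J}\{f_{1}(J)+f_{2}(J)\}$ over $J\subseteq Y\setminus X$ (keeping both summands M$\sp{\natural}$-concave rather than negating one, but that is only a change of convention), applies the Fenchel-type duality theorem, and reduces the whole statement to showing that the dual objective is at least $f(X)+f(Y)$ at \emph{every} price vector. After absorbing the price into $f$, that per-price bound is exactly your ``key inequality''
\[
\max_{K\subseteq Y} g(I\cup K)+\max_{J\subseteq Y} g(I'\cup J)\ \ge\ g(I\cup I')+g(Y),
\]
so you have correctly isolated the crux. (Your preliminary normalizations are not needed: the paper carries $C=X\cap Y$ and the $-\infty$ values along explicitly, checking only that $\dom g_{1}=\dom g_{2}=\RR\sp{Y_{0}}$ so that the duality theorem applies.)

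The proposal does not, however, prove the key inequality, and you say so yourself: the induction on $|I|+|I'|$ founders on the element migrating to the wrong side, and you leave open how to close that gap. This is not a technicality --- it is the entire content of the paper's Lemma \ref{LMg1qg2q}. The missing idea is to prove the key inequality on the \emph{conjugate} side rather than by recursion on $g$. Let $h(p)=\max_{Z}\{g(Z)-p(Z)\}$ be the concave conjugate of $g$. By the conjugacy theorem, $h$ is polyhedral L$\sp{\natural}$-convex, hence submodular on $\RR\sp{D}$: $h(p)+h(p')\ge h(p\vee p')+h(p\wedge p')$. Take $p\sp{(1)}$ to be $-M$ on $I$, $+M$ on $I'$, $0$ on $Y$, and $p\sp{(2)}$ to be $+M$ on $I$, $-M$ on $I'$, $0$ on $Y$, with $M$ large. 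Then $h(p\sp{(1)})=\max_{K}g(I\cup K)+M|I|$ and $h(p\sp{(2)})=\max_{J}g(I'\cup J)+M|I'|$, while $p\sp{(1)}\vee p\sp{(2)}$ is $+M$ on all of $I\cup I'$ (so $h(p\sp{(1)}\vee p\sp{(2)})\ge g(Y)$) and $p\sp{(1)}\wedge p\sp{(2)}$ is $-M$ on all of $I\cup I'$ (so $h(p\sp{(1)}\wedge p\sp{(2)})\ge g(I\cup I')+M(|I|+|I'|)$). Substituting into submodularity, the $M$-terms cancel and the key inequality drops out. Note that the submodularity you correctly dismissed as insufficient is that of the M$\sp{\natural}$-concave function $g$ itself; what does the job is the submodularity (L$\sp{\natural}$-convexity) of its \emph{conjugate}, evaluated at two price vectors that pin $I$ and $I'$ to opposite sides. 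Until this step (or a genuine substitute for your inductive argument) is supplied, the proof is incomplete.
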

\begin{proof}
The proof is given in Section \ref{SCproof}.
\end{proof}

As an immediate corollary we obtain the multiple exchange property 
for the maximizers.
The set of the maximizers of $f$ is denoted by $\argmax f$.

\begin{theorem} \label{THmultexchmnatmax}
Let $f: 2\sp{N} \to \RR \cup \{ -\infty \}$ 
be an M$\sp{\natural}$-concave function.
For any  $X, Y \in \argmax f$ and $I \subseteq X \setminus Y$,
there exists a subset 
$J \subseteq Y \setminus X$
such that
$(X \setminus I) \cup J \in \argmax f$ and $(Y \setminus J) \cup I \in \argmax f$.
\end{theorem}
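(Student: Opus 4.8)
The plan is to read off Theorem~\ref{THmultexchmnatmax} directly from Theorem~\ref{THmultexchmnat}, so I expect this to be a short argument rather than a new one. First I would fix the maximum value $M := \max_{Z \subseteq N} f(Z)$; this is a well-defined real number because $2\sp{N}$ is finite and $\dom f \neq \emptyset$, and by definition $Z \in \argmax f$ precisely when $f(Z) = M$. In particular $\argmax f$ is nonempty, so the statement is not vacuous.

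Next, given $X, Y \in \argmax f$ and $I \subseteq X \setminus Y$, I would invoke the multiple exchange property (M$\sp{\natural}$-EXC$_{\rm m}$) of Theorem~\ref{THmultexchmnat} to produce a set $J \subseteq Y \setminus X$ with
\[
f(X) + f(Y) \leq f((X \setminus I) \cup J) + f((Y \setminus J) \cup I).
\]
The left-hand side equals $2M$ since $f(X) = f(Y) = M$, while each of the two terms on the right-hand side is at most $M$ by maximality of $M$. A sum of two numbers, each $\leq M$, being $\geq 2M$ is possible only if both equal $M$; hence $f((X \setminus I) \cup J) = f((Y \setminus J) \cup I) = M$, i.e.\ both $(X \setminus I) \cup J$ and $(Y \setminus J) \cup I$ lie in $\argmax f$ (and a fortiori in $\dom f$). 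This is exactly the asserted conclusion.

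Thus the real content is entirely in Theorem~\ref{THmultexchmnat}, whose proof via the Fenchel-type duality theorem is deferred to Section~\ref{SCproof}; the present statement adds nothing beyond the elementary observation that $a + b \geq 2M$ together with $a \leq M$ and $b \leq M$ forces $a = b = M$. I do not anticipate any obstacle once Theorem~\ref{THmultexchmnat} is in hand; the only point meriting a word of care is the finiteness of $M$, which legitimizes the forcing step and follows immediately from $\dom f \neq \emptyset$ on the finite ground set $N$. This is why the result is labeled an immediate corollary.
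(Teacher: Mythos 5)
Your proof is correct and is exactly the argument the paper intends: the paper states this result as an immediate corollary of Theorem~\ref{THmultexchmnat} without further elaboration, and your observation that $a+b\geq 2M$ with $a,b\leq M$ forces $a=b=M$ is the whole content. Nothing to add.
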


The concept of valuated matroid due to 
of Dress--Wenzel \cite{DW90,DW92} (see also \cite[Chapter 5]{Mspr2000})
is defined in terms of an exchange property similar to (M$\sp{\natural}$-EXC).
A function
$f: 2\sp{N} \to \RR \cup \{ -\infty \}$ 
with $\dom f \not= \emptyset$
is called a {\em valuated matroid}, if,
for any $X, Y \subseteq N$ and $i \in X \setminus Y$,
it holds that
\begin{align}
f( X) + f( Y )   \leq 
 \max_{j \in Y \setminus X}  \{ f( X - i + j) + f( Y + i -j) \} .
\label{valmatexc}
\end{align}
A valuated matroid is nothing but 
an M$\sp{\natural}$-concave function $f$ such that
$\dom f$ consists of equi-cardinal subsets, i.e.,
$|X| = |Y|$ for any $X, Y \in \dom f$.
In this case, $\dom f$ forms the basis family of a matroid on $N$.

As a corollary of Theorem~\ref{THmultexchmnat} we obtain the following.

\begin{theorem} \label{THmultexchvalmat}
A valuated matroid $f$ has the multiple exchange property 
{\rm (M$\sp{\natural}$-EXC$_{\rm m}$)} with $|J|=|I|$.
\end{theorem}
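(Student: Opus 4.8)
The plan is to obtain Theorem~\ref{THmultexchvalmat} as a direct consequence of Theorem~\ref{THmultexchmnat}, using only the fact that the effective domain of a valuated matroid consists of equi-cardinal sets. Since a valuated matroid is in particular M$\sp{\natural}$-concave, Theorem~\ref{THmultexchmnat} already supplies, for the given $X, Y \subseteq N$ and $I \subseteq X \setminus Y$, some $J \subseteq Y \setminus X$ with
\[
f( X) + f( Y ) \leq f((X \setminus I) \cup J) + f((Y \setminus J) \cup I) .
\]
The only thing left to establish is the cardinality constraint $|J| = |I|$, and in fact I would argue that \emph{any} $J$ satisfying this inequality automatically satisfies $|J| = |I|$ whenever the left-hand side is finite.

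If $X \notin \dom f$ or $Y \notin \dom f$, then $f(X) + f(Y) = -\infty$, so \eqref{mnatconcavexcmult} with the additional requirement $|J| = |I|$ holds trivially: the maximum there is taken over a (possibly empty) set of choices of $J$, and by the empty-maximum convention together with $-\infty \leq -\infty$ the inequality is satisfied. Hence I may assume $X, Y \in \dom f$, and then $|X| = |Y|$ by the definition of a valuated matroid. Fixing a $J$ as above, finiteness of $f(X) + f(Y)$ forces the right-hand side to be finite too, so both $(X \setminus I) \cup J$ and $(Y \setminus J) \cup I$ lie in $\dom f$; in particular $|(X \setminus I) \cup J| = |X|$. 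Since $I \subseteq X \setminus Y \subseteq X$ and $J \subseteq Y \setminus X$, the set $J$ is disjoint from $X \setminus I$, so $|(X \setminus I) \cup J| = |X| - |I| + |J|$, whence $|J| = |I|$, as claimed.

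I do not anticipate any genuine obstacle, as all the substance is already packed into Theorem~\ref{THmultexchmnat} and what remains is elementary counting together with the equi-cardinality of $\dom f$. If instead one wanted a self-contained proof from the single-element exchange~\eqref{valmatexc}, say by induction on $|I|$, the delicate point would be to guarantee that each one-element exchange draws a \emph{fresh} element out of $Y \setminus X$ while leaving the not-yet-treated part of $I$ exchangeable at later stages, so that $J$ grows by exactly one and stays inside $Y \setminus X$; the Fenchel-duality argument underlying Theorem~\ref{THmultexchmnat} makes this detour unnecessary.
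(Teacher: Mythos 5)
Your proof is correct and matches the paper's intent: the paper states Theorem~\ref{THmultexchvalmat} simply as a corollary of Theorem~\ref{THmultexchmnat}, and your counting argument (equi-cardinality of $\dom f$ plus $J \cap (X\setminus I) = \emptyset$ forcing $|J|=|I|$) is exactly the routine verification being left implicit. Nothing is missing.
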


This theorem contains, as a special case,
the multiple exchange theorem for matroid bases 
due to Brylawski \cite{Bry73}, Greene \cite{Gre73} and Woodall \cite{Woo74};
see also \cite{Kun86b,McDia75,Sch03}.

\begin{theorem}[\protect{\cite{Bry73,Gre73,Woo74}}] \label{THmultexchbase}
Let $X$ and $Y$ be bases in a matroid, and let 
$I \subseteq X \setminus Y$.
Then there exists a subset 
$J \subseteq Y \setminus X$
such that
$(X \setminus I) \cup J$ and $(Y \setminus J) \cup I$ are both bases.
\end{theorem}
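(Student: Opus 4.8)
The plan is to deduce Theorem~\ref{THmultexchbase} directly from Theorem~\ref{THmultexchvalmat} by encoding the given matroid as a valuated matroid through the $\{0,-\infty\}$-valued indicator of its basis family. Let $M$ be a matroid on $N$ with basis family $\mathcal{B} \neq \emptyset$, and define $f \colon 2\sp{N} \to \RR \cup \{-\infty\}$ by $f(X) = 0$ for $X \in \mathcal{B}$ and $f(X) = -\infty$ otherwise, so that $\dom f = \mathcal{B}$. Once we know that $f$ is a valuated matroid, Theorem~\ref{THmultexchvalmat} yields, for bases $X, Y \in \mathcal{B}$ and any $I \subseteq X \setminus Y$, a subset $J \subseteq Y \setminus X$ with $|J| = |I|$ such that $f(X) + f(Y) \leq f((X \setminus I) \cup J) + f((Y \setminus J) \cup I)$. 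The left-hand side equals $0$, so the right-hand side is not $-\infty$; hence both $(X \setminus I) \cup J$ and $(Y \setminus J) \cup I$ lie in $\dom f = \mathcal{B}$, i.e., both are bases of $M$, which is exactly the assertion of Theorem~\ref{THmultexchbase}.

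It therefore remains to verify that this $f$ is a valuated matroid, that is, that it satisfies \eqref{valmatexc} for all $X, Y \subseteq N$ and $i \in X \setminus Y$. If $X \notin \mathcal{B}$ or $Y \notin \mathcal{B}$, the left-hand side of \eqref{valmatexc} is $-\infty$ and the inequality is trivial. If $X, Y \in \mathcal{B}$ and $i \in X \setminus Y$, then (since $|X| = |Y|$) the set $Y \setminus X$ is nonempty, and the classical symmetric single-element exchange property of matroid bases provides $j \in Y \setminus X$ with $X - i + j \in \mathcal{B}$ and $Y + i - j \in \mathcal{B}$; for this $j$ the term $f(X - i + j) + f(Y + i - j)$ equals $0 = f(X) + f(Y)$, so \eqref{valmatexc} holds. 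Equivalently, $\dom f = \mathcal{B}$ is equi-cardinal, and as recorded after \eqref{valmatexc} such equi-cardinal domains are precisely matroid basis families, so $f$ is indeed a valuated matroid.

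Finally, the extra conclusion $|J| = |I|$ is automatic in this setting: since all bases of $M$ have the same cardinality, $|(X \setminus I) \cup J| = |X| = |Y| = |(Y \setminus J) \cup I|$ forces $|J| = |I|$ (so one need not even invoke the cardinality refinement in Theorem~\ref{THmultexchvalmat}). I expect no real obstacle beyond citing Theorem~\ref{THmultexchvalmat}; the only genuine external input is the symmetric exchange property of matroid bases used to check that $f$ is a valuated matroid, which is a standard fact (see, e.g., \cite{Sch03}).
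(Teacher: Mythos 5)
Your proof is correct and follows essentially the same route as the paper, which presents Theorem~\ref{THmultexchbase} as an immediate special case of Theorem~\ref{THmultexchvalmat} obtained by taking $f$ to be the $\{0,-\infty\}$-valued indicator of the basis family. You have merely made explicit the details the paper leaves implicit (verifying via Brualdi's symmetric exchange property that this indicator is a valuated matroid, and reading off membership in $\dom f$ from the finiteness of the right-hand side), and these details are all in order.
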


The converse of Theorem~\ref{THmultexchmnat} should be intuitively obvious, 
but a formal proof is needed.
We have to assure that 
for $I=\{ i \}$ in {\rm (M$\sp{\natural}$-EXC$_{\rm m}$)}
there exists $J$ with $|J| \leq 1$. 

\begin{proposition} \label{PRmultTOusual}
{\rm (M$\sp{\natural}$-EXC$_{\rm m}$)} implies
{\rm (M$\sp{\natural}$-EXC)}.
\end{proposition}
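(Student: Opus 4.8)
The statement asserts that a function $f$ satisfying the multiple exchange property {\rm (M$\sp{\natural}$-EXC$_{\rm m}$)} at every triple $(X,Y,I)$ also satisfies the single-element exchange property {\rm (M$\sp{\natural}$-EXC)} at every triple $(X,Y,i)$. As the preceding remark points out, the whole content is to show that in {\rm (M$\sp{\natural}$-EXC$_{\rm m}$)} with $I=\{i\}$ the set $J$ can be chosen with $|J|\le 1$. The plan is to prove this by induction on $|Y\setminus X|$. Fix $X,Y\subseteq N$ and $i\in X\setminus Y$; we may assume $X,Y\in\dom f$, since otherwise $f(X)+f(Y)=-\infty$ and {\rm (M$\sp{\natural}$-EXC)} is vacuous at $(X,Y,i)$. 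Applying {\rm (M$\sp{\natural}$-EXC$_{\rm m}$)} with $I=\{i\}$ produces $J\subseteq Y\setminus X$ with $f(X)+f(Y)\le f((X-i)\cup J)+f((Y+i)\setminus J)$, and both right-hand terms are then finite. If $|J|\le 1$ this inequality is exactly one of the two alternatives in {\rm (M$\sp{\natural}$-EXC)}, so we are done; in particular the base case $|Y\setminus X|\le 1$ is immediate, since there $J\subseteq Y\setminus X$ forces $|J|\le 1$.

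For the inductive step let $|Y\setminus X|=m\ge 2$, and assume {\rm (M$\sp{\natural}$-EXC)} is known for $f$ at every triple $(U,V,k)$ with $|V\setminus U|<m$. Among all witnesses $J$ produced above, fix one of minimum cardinality $s$; if $s\le 1$ we are done, so suppose $s\ge 2$, and put $X'=(X-i)\cup J$, $Y'=(Y+i)\setminus J$, so that $f(X')+f(Y')\ge f(X)+f(Y)$. A direct computation gives $Y'\setminus X'=((Y\setminus X)\setminus J)\cup\{i\}$, hence $|Y'\setminus X'|=m-s+1<m$, so the inductive hypothesis applies to $f$ at $(X',Y',j)$ for every $j\in J$ (note $j\in X'\setminus Y'$). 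Fix such a $j$. Minimality of $s$ forces $f((X-i)\cup(J\setminus\{j\}))+f((Y+i)\setminus(J\setminus\{j\}))<f(X)+f(Y)\le f(X')+f(Y')$, so the term $f(X'-j)+f(Y'+j)$ cannot attain the maximum in {\rm (M$\sp{\natural}$-EXC)} at $(X',Y',j)$; hence that maximum is attained by a term $f((X'-j)\cup\{\ell\})+f((Y'+j)\setminus\{\ell\})$ for some $\ell\in Y'\setminus X'$. If $\ell\ne i$, i.e.\ $\ell\in(Y\setminus X)\setminus J$, this term equals $f((X-i)\cup J_1)+f((Y+i)\setminus J_1)$ with $J_1=(J\setminus\{j\})\cup\{\ell\}$, so $J_1$ is again a witness, of cardinality $s$. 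If $\ell=i$, the term equals $f(X\cup(J\setminus\{j\}))+f(Y\setminus(J\setminus\{j\}))$, and applying the inductive hypothesis once more at $(X\cup(J\setminus\{j\}),\ Y\setminus(J\setminus\{j\}),\ i)$---legitimate because its ``added side'' $(Y\setminus X)\setminus(J\setminus\{j\})$ has size $m-s+1<m$---yields, in the same manner, either a witness of cardinality $s-1$ (contradicting minimality, so we are done) or a witness of the form $(J\setminus\{j\})\cup\{p\}$ of cardinality $s$.

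Thus, short of an outright contradiction with minimality, the argument replaces the minimum-cardinality witness $J$ by another one of the form $(J\setminus\{j\})\cup\{p\}$, and this ``swap'' does not decrease the quantity $\Phi(J):=f((X-i)\cup J)+f((Y+i)\setminus J)$. This cardinality-preserving swap is exactly the main obstacle: a priori it could be iterated forever without $|J|$ ever dropping, so minimality of $|J|$ by itself is not enough. I would try to close the gap by refining the choice of $J$: among minimum-cardinality witnesses take one maximizing $\Phi$, and, among those, one extremal with respect to a fixed linear order on $N$; any swap then stays inside this optimal set, and choosing $j$ to be the extremal element of $J$ should force the swap to violate the extremality, so that for some $j$ the case analysis must instead terminate in a witness of cardinality $s-1$---but making this last step precise is the delicate point, since swaps need not respect a naive linear order and may require a more global extremality argument. (An alternative route is to lift $f$ to the valuated matroid $g$ on the ground set $N$ enlarged by $\max_{Z\in\dom f}|Z|$ dummy elements, whose domain is the base family of a matroid: there the implication ``multiple exchange $\Rightarrow$ single exchange'' is immediate from equicardinality of bases, and one only has to check that {\rm (M$\sp{\natural}$-EXC$_{\rm m}$)} for $f$ transfers to the multiple exchange property for $g$ --- which again runs into a cardinality-matching issue for the exchanged set.)
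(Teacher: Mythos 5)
Your proposal correctly reduces the problem to showing that, for $I=\{i\}$, a witness $J$ in (M$\sp{\natural}$-EXC$_{\rm m}$) can be taken with $|J|\leq 1$, and your induction on $|Y\setminus X|$ together with the analysis of a minimum-cardinality witness is sound as far as it goes. But the proof is not complete, and you say so yourself: every branch of your case analysis that does not immediately contradict minimality merely produces another witness $(J\setminus\{j\})\cup\{p\}$ of the same cardinality $s$, and you give no argument that iterating these cardinality-preserving swaps ever terminates in a witness of size $s-1$. The suggested repairs (an extremal choice of $J$ with respect to a linear order, or lifting to a valuated matroid on an enlarged ground set) are left as sketches, and you acknowledge that each runs into the same unresolved cardinality-matching issue. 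So this is a genuine gap, and it sits exactly at the heart of the proposition: the difficulty is real, not a technicality.

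The paper avoids the witness-shrinking strategy altogether and splits the implication into a combinatorial part and a quantitative part. First, (M$\sp{\natural}$-EXC$_{\rm m}$) trivially gives the unquantified multiple exchange property (B$\sp{\natural}$-EXC$_{\rm m}$) for $\dom f$, and a minimal-counterexample argument on $|X\Delta Y|$ (using the Tardos/Murota--Shioura equivalence of (B$\sp{\natural}$-EXC$_{\pm}$), (B$\sp{\natural}$-EXC) and generalized matroids) shows that $\dom f$ satisfies (B$\sp{\natural}$-EXC). Second, once the domain is known to have this structure, M$\sp{\natural}$-concavity becomes a \emph{local} property: by Theorem~\ref{THmconcavlocexc} it suffices to verify three inequalities involving only sets of the form $X+i+j$, $X+k$, $X+k+l$, and these follow from instances of (M$\sp{\natural}$-EXC$_{\rm m}$) with $|I|=1$ applied to pairs of small symmetric difference, where the possible witnesses $J$ are forced to be small (the hardest case, four distinct elements, is handled by a short averaging argument rather than by shrinking a witness). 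If you want to salvage your own approach, the missing ingredient is precisely such a localization or some potential-function argument on the exchange structure; minimality of $|J|$ alone does not suffice, as your own swap analysis demonstrates.
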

\begin{proof}
We provide two proofs in this paper.
The first, given in Section \ref{SCecoimpl} for an economic implication,
is rather indirect, 
relying essentially on some (generalizations of) known results in economics.
The second proof, given in Section \ref{SCproofconverse},
 is more straightforward, 
relying on basic results in matroid theory and discrete convex analysis.
\end{proof}

{}From Proposition \ref{PRmultTOusual}
and Theorem~\ref{THmultexchmnat}
we can obtain a characterization of 
M$\sp{\natural}$-concave functions
in terms of the multiple exchange property.

\begin{theorem} \label{THmnatiffmultexch}
A function
$f: 2\sp{N} \to \RR \cup \{ -\infty \}$ 
is M$\sp{\natural}$-concave 
if and only if it has the multiple exchange property 
{\rm (M$\sp{\natural}$-EXC$_{\rm m}$)}.
\end{theorem}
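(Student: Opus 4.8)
The plan is to combine the two directions already isolated in the excerpt. The "only if" direction is exactly Theorem~\ref{THmultexchmnat}: every M$\sp{\natural}$-concave function satisfies (M$\sp{\natural}$-EXC$_{\rm m}$). The "if" direction is Proposition~\ref{PRmultTOusual}: (M$\sp{\natural}$-EXC$_{\rm m}$) implies (M$\sp{\natural}$-EXC), and (M$\sp{\natural}$-EXC) is the definition of M$\sp{\natural}$-concavity. So at the top level the theorem is just a one-line assembly of these two facts, and the whole burden of work is pushed into Proposition~\ref{PRmultTOusual} (Theorem~\ref{THmultexchmnat} being assumed).

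For the substantive half — proving (M$\sp{\natural}$-EXC$_{\rm m}$) $\Rightarrow$ (M$\sp{\natural}$-EXC) — I would proceed as follows. Fix $X,Y\subseteq N$ with $X,Y\in\dom f$ and $i\in X\setminus Y$; I must produce either the single-variable inequality \eqref{mconcav1} or some $j\in Y\setminus X$ realizing \eqref{mconcav2}. Apply (M$\sp{\natural}$-EXC$_{\rm m}$) with $I=\{i\}$: there is $J\subseteq Y\setminus X$ with $f(X)+f(Y)\le f((X\setminus\{i\})\cup J)+f((Y\setminus J)\cup\{i\})$. If $|J|\le 1$ we are immediately done (either $J=\emptyset$, giving \eqref{mconcav1}, or $J=\{j\}$, giving \eqref{mconcav2}). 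The real work is to show we may always \emph{take $|J|\le 1$}. The natural approach is: among all $J$ witnessing the inequality for $I=\{i\}$, pick one of minimum cardinality, and argue by contradiction that $|J|\ge 2$ is impossible. With $|J|\ge 2$ one would like to apply the exchange property again — either the multiple one with a smaller $I$ inside the new sets, or, as the excerpt hints, lean on matroid-theoretic structure (restricting $f$ to the interval $[\,X\cap Y,\ X\cup Y\,]$, or using the local exchangeability of $\dom f$) together with the M$\sp{\natural}$ exchange axiom itself — to split off one element of $J$, contradicting minimality. This cardinality-reduction step is the crux.

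The main obstacle, then, is precisely this reduction from a witnessing $J$ of arbitrary size to one of size at most one. A direct induction on $|J|$ is tempting but delicate: shrinking $J$ by one element while preserving the value inequality $f(X)+f(Y)\le f(X-i+J')+f(Y+i-J')$ is not automatic, since (M$\sp{\natural}$-EXC$_{\rm m}$) only promises \emph{existence} of some $J$, not monotone behaviour in $J$. I expect the clean route is the one the excerpt flags for Section~\ref{SCproofconverse}: invoke the known fact that $\dom f$ (and the level sets of $f$) carries enough matroidal structure that the classical single-element exchange between $X$ and $(X\setminus\{i\})\cup J$ can be combined with one application of (M$\sp{\natural}$-EXC$_{\rm m}$) to peel off a single $j\in J$; iterating collapses $|J|$ to $\le 1$. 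Once Proposition~\ref{PRmultTOusual} is in hand, the theorem follows: (M$\sp{\natural}$-EXC$_{\rm m}$) gives (M$\sp{\natural}$-EXC) hence M$\sp{\natural}$-concavity, and conversely M$\sp{\natural}$-concavity gives (M$\sp{\natural}$-EXC$_{\rm m}$) by Theorem~\ref{THmultexchmnat}.
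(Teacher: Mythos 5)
Your top-level assembly is exactly the paper's: the theorem is the conjunction of Theorem~\ref{THmultexchmnat} (the ``only if'' direction) and Proposition~\ref{PRmultTOusual} (the ``if'' direction), so the entire burden sits on the latter. The difficulty is that your sketch of Proposition~\ref{PRmultTOusual} stops at precisely the point you yourself call the crux, and the route you gesture at is not one that closes. You propose to take a minimum-cardinality $J$ witnessing the \emph{value} inequality $f(X)+f(Y)\le f((X-i)\cup J)+f((Y\setminus J)+i)$ and to ``peel off'' elements of $J$. But when you apply (M$\sp{\natural}$-EXC$_{\rm m}$) again, say to the pair $((X-i)\cup J,\,X)$ or $(Y,\,(Y\setminus J)+i)$, you obtain upper bounds on $f((X-i)\cup J)+f(X)$ and $f(Y)+f((Y\setminus J)+i)$, not on the mixed sum $f((X-i)\cup J)+f((Y\setminus J)+i)$; these do not chain with the original inequality to produce a smaller witness, and no concrete iteration scheme is exhibited. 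As written, the ``if'' direction remains unproved.

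The paper circumvents this obstacle by splitting the problem differently (Section~\ref{SCproofconverse}). First, the minimal-$|J|$ contradiction argument is carried out only at the level of the \emph{domain}: $\dom f$ inherits (B$\sp{\natural}$-EXC$_{\rm m}$), and a four-case analysis with induction on $|X\Delta Y|$ (Proposition~\ref{PRgexcmTOgexc0}, combined with the Tardos--Murota--Shioura equivalence in Proposition~\ref{PRgmattardos}) shows this implies (B$\sp{\natural}$-EXC); there the peeling works because only membership, not function values, must be preserved. Second, once $\dom f$ satisfies (B$\sp{\natural}$-EXC), the local characterization of M$\sp{\natural}$-concavity (Theorem~\ref{THmconcavlocexc}) reduces (M$\sp{\natural}$-EXC) to three inequalities on sets differing in at most two elements, where the witnesses $J$ supplied by (M$\sp{\natural}$-EXC$_{\rm m}$) are automatically small; the only nontrivial case (\ref{mnatconcavexc3loc}) is settled by a chain of EXC$_{\rm m}$ applications, a symmetry argument, and an averaging inequality. (The paper also records an indirect alternative via (SNC) $\Rightarrow$ (NC) $\iff$ (GS) $\iff$ (M$\sp{\natural}$-EXC).) Your plan needs to be replaced by, or fleshed out into, one of these arguments before the theorem can be considered proved.
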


\section{Proof of Theorem~\ref{THmultexchmnat}}
\label{SCproof}

In this section we give a proof to the main theorem, Theorem~\ref{THmultexchmnat}.
Let 
$f: 2\sp{N} \to \RR \cup \{ -\infty \}$ 
be an M$\sp{\natural}$-concave function,
$X, Y \in \dom f$ and $I \subseteq X \setminus Y$.
We shall prove
\begin{align}
f( X) + f( Y )   \leq 
 \max_{J \subseteq Y \setminus X} 
 \{  f((X \setminus I) \cup J) +f((Y \setminus J) \cup I)  \} .
\label{mnatconcavexcmult2}
\end{align}
Our proof is based on the Fenchel-type duality theorem in discrete convex analysis.

With the notations
\begin{align*}
 &C = X \cap Y,
\qquad
 X_{0} = X \setminus Y = X \setminus C,
\qquad
 Y_{0} = Y \setminus X = Y \setminus C ,
\\
& f_{1}(J) =  f((X \setminus I) \cup J) 
  = f( (X_{0} \setminus I) \cup C \cup J)
\qquad (J \subseteq Y_{0}),
\\
& f_{2}(J) = f((Y \setminus J) \cup I)
  = f(  I \cup C \cup (Y_{0} \setminus J) )
\qquad (J \subseteq Y_{0}),
\end{align*}
the inequality (\ref{mnatconcavexcmult2}) is rewritten as 
\begin{align}
f( X) + f( Y )   \leq 
 \max_{J \subseteq Y_{0}}  \{ f_{1}(J) + f_{2}(J) \}.
\label{mnatconcavexcmult3}
\end{align}
Both  $f_{1}$ and $f_{2}$
are M$\sp{\natural}$-concave set functions on $Y_{0}$.

Consider the (convex) conjugate functions of $f_{1}$ and $f_{2}$
given by
\begin{align*}
g_{1}(q) &=  
 \max_{J \subseteq Y_{0}} \{  f_{1}(J) - q(J) \} 
\qquad (q \in \RR\sp{Y_{0}}),
\\
g_{2}(q) &= 
 \max_{J \subseteq Y_{0}} \{  f_{2}(J) - q(J) \} 
\qquad (q \in \RR\sp{Y_{0}}),
\end{align*}
where $q(J) = \sum_{j \in J} q_{j}$.
For any  $J \subseteq Y_{0}$
and $q \in \RR\sp{Y_{0}}$, we have
\begin{align*}
f_{1}(J) + f_{2}(J) &=  
( f_{1}(J) - q(J)) + ( f_{2}(J) + q(J) )
\\  
 & \leq  
 \max_{J \subseteq Y_{0}} \{  f_{1}(J) - q(J) \} 
 + 
 \max_{J \subseteq Y_{0}} \{  f_{2}(J) + q(J) \} 
\\
 &=  g_{1}(q) + g_{2}(-q).
\end{align*}
The Fenchel-type duality theorem
in discrete convex analysis
\cite[Theorem 8.21 (1)]{Mdcasiam}
asserts that there exist $J$ and $q$ 
for which the above inequality holds in equality, i.e., 
\begin{align}
 \max_{J \subseteq Y_{0}}  \{ f_{1}(J) + f_{2}(J) \}
 = \min_{q \in \RR\sp{Y_{0}}} \{ g_{1}(q) + g_{2}(-q) \}.
\label{fencmaxmin}
\end{align}
Note that $\dom g_{1} = \dom g_{2} = \RR\sp{Y_{0}}$ 
and the assumption in 
\cite[Theorem 8.21 (1)]{Mdcasiam}
is satisfied.

The desired inequality (\ref{mnatconcavexcmult3})
follows from (\ref{fencmaxmin}) and Lemma \ref{LMg1qg2q} below.

\begin{lemma} \label{LMg1qg2q}
For any $q \in \RR\sp{Y_{0}}$, we have
$g_{1}(q) + g_{2}(-q) \geq f( X) + f( Y )$.
\end{lemma}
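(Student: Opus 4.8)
The plan is to unfold the definitions of $g_1$ and $g_2$ and pick, for each fixed $q$, convenient witnesses in the two maximization problems. Recall
\[
g_1(q) + g_2(-q) = \max_{J \subseteq Y_0}\{f_1(J) - q(J)\} + \max_{K \subseteq Y_0}\{f_2(K) + q(K)\}.
\]
Since we only need a lower bound, it suffices to exhibit \emph{one} choice of $J$ and one choice of $K$. The natural guess, recalling that $f_1(J) = f((X_0\setminus I)\cup C\cup J)$ and $f_2(K) = f(I\cup C\cup(Y_0\setminus K))$, is $J = Y_0$ and $K = \emptyset$: then $f_1(Y_0) = f((X_0\setminus I)\cup C\cup Y_0) = f((X\setminus I)\cup Y_0) = f(X\cup Y_0 \setminus I) = f((X\cup Y)\setminus I)$, and $f_2(\emptyset) = f(I\cup C\cup Y_0) = f(I\cup(Y\setminus X)\cup(X\cap Y))$... hmm, this does not immediately give $f(X)+f(Y)$, so the single-witness idea must be applied after first performing an exchange step.

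The cleaner approach I would take: first use (M$\sp{\natural}$-EXC) or, more precisely, the known relation between M$\sp{\natural}$-concavity and concave conjugates, to recognize that $g_1$ and $g_2$ are M$\sp{\natural}$-\emph{convex} functions (polyhedral), and that for M$\sp{\natural}$-concave $h$ with conjugate $g(q) = \max_J\{h(J)-q(J)\}$ one has $h(J) \le g(q) + q(J)$ for \emph{all} $J, q$, with the slack controlled. Then I would bound $g_1(q) \ge f_1(J) - q(J)$ for a cleverly chosen $J \subseteq Y_0$ and $g_2(-q) \ge f_2(K) + q(K)$ for a cleverly chosen $K$, and arrange that $(X\setminus I)\cup J$ and $(Y\setminus K)\cup I$ together "cover" $X$ and $Y$ in the sense that the M$\sp{\natural}$-concavity inequality, applied repeatedly, yields $f((X\setminus I)\cup J) + f((Y\setminus K)\cup I) \ge f(X) + f(Y)$ plus a term $q(J) - q(K)$ that telescopes away. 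Concretely I expect $J$ and $K$ to be chosen so that $J \cup K = Y_0$ or $J$, $K$ partition $Y_0$ suitably relative to the sign pattern of $q$ (put $j \in J$ when $q_j$ is small, $j\in K$ when $q_j$ is large), which is exactly the structure that makes $-q(J) + q(K)$ nonpositive.

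The main obstacle will be producing, for a general real vector $q$, the combinatorial exchange that realizes the inequality $f(\text{something})+f(\text{something}) \ge f(X)+f(Y)$ with the right $q$-correction. I anticipate doing this by a "sequential exchange" argument: starting from the pair $(X,Y)$, repeatedly apply (M$\sp{\natural}$-EXC) to move elements of $I$ from $X$ to $Y$ and elements of $Y_0$ between the two sets, one at a time, in an order dictated by the values $q_j$ (largest first, say), each step not decreasing $f(\cdot)+f(\cdot)$, and tracking how the multiset of elements currently in each set differs from the targets $(X\setminus I)\cup J$ and $(Y\setminus K)\cup I$. Keeping the bookkeeping of which $q_j$ has been "spent" and showing the accumulated correction is exactly $-q(J)+q(K)$ — hence that the final configuration's $f$-sum dominates $f(X)+f(Y)$ — is the delicate part; the Fenchel duality step has already done the heavy lifting of converting the existential "there exists $J$" into the universally-quantified statement of this lemma, so what remains is genuinely a matroid-style exchange induction.
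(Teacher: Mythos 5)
Your proposal identifies the right target --- exhibiting witnesses $J$ and $K$ (allowed to differ) with $f_1(J)-q(J)+f_2(K)+q(K)\ge f(X)+f(Y)$ --- but the step that would actually produce them is missing, and the ``sequential exchange induction'' you defer to is precisely where the known difficulty lies. If you try to move the elements of $I$ from $X$ to $Y$ one at a time via (M$\sp{\natural}$-EXC), then after the first step the two sets are, say, $X-i_1+j_1$ and $Y+i_1-j_1$, and the next application of (M$\sp{\natural}$-EXC) for $i_2$ may select as its exchange element $i_1$ itself (which now lies in the second set but not the first), undoing the previous move; there is no evident monotone quantity, and the $q$-corrections do not telescope in any controlled way. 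This is exactly the obstruction that makes the multiple exchange property nontrivial in the first place, so an argument of this shape cannot be waved through. Note also that if your induction produced a common witness $J=K$, it would prove (\ref{mnatconcavexcmult3}) outright and the Fenchel duality step would be superfluous --- a warning sign that the lemma cannot be reduced to a routine chain of single exchanges applied to the pair $(X,Y)$.

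The paper's proof uses a different mechanism that your sketch does not touch: it passes to the conjugate $g(p)=\max_{Z\subseteq N}\{f(Z)-p(Z)\}$ of $f$ on the \emph{whole} ground set $N$, invokes the conjugacy theorem to conclude that $g$ is polyhedral L$\sp{\natural}$-convex, hence submodular on $\RR\sp{N}$, and applies $g(p)+g(p')\ge g(p\vee p')+g(p\wedge p')$ to two price vectors $p\sp{(1)},p\sp{(2)}$ that agree with $q$ on $Y_{0}$ and carry penalties $\pm M$ elsewhere, chosen so that $g(p\sp{(1)})$ and $g(p\sp{(2)})$ compute $g_{1}(q)$ and $g_{2}(-q)$ up to explicit multiples of $M$, while $g(p\sp{(1)}\vee p\sp{(2)})$ and $g(p\sp{(1)}\wedge p\sp{(2)})$ are bounded below simply by evaluating the defining maximum at $Z=Y$ and $Z=X$. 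The submodularity of $g$ packages all the exchange inequalities you would need at once; without it (or an equivalent substitute) your plan has a genuine gap at its central step.
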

\begin{proof}
Let $g$ be the (convex) conjugate function of $f$, i.e.,
\begin{align*}
g(p) &=  
 \max_{Z \subseteq N} \{  f(Z) - p(Z) \} 
\qquad (p \in \RR\sp{N}).
\end{align*}
By the conjugacy theorem 
in discrete convex analysis
(\cite[Theorem 3.4]{Mbonn09}, \cite[Theorems 8.4, (8.10)]{Mdcasiam}),
$g$ is a polyhedral L$\sp{\natural}$-convex function.
In particular, it is submodular:
\begin{equation} \label{gsubm}
  g(p) + g(p') \geq g(p \vee p') + g(p \wedge p')
\qquad (p, p' \in \RR\sp{N}) ,
\end{equation}
where $p \vee p'$ and $p \wedge p'$ denote the vectors of component-wise
maximum and minimum, i.e.,
\[ 
 (p \vee p')_{i}   = \max(p_{i}, p'_{i}),
\qquad
 (p \wedge p')_{i} = \min(p_{i}, p'_{i}) .
\] 

For a vector $q \in \RR\sp{Y_{0}}$ we define 
$p\sp{(1)}, p\sp{(2)} \in \RR\sp{N}$ by
\begin{align*} 
p\sp{(1)}_{i}  =
   \left\{  \begin{array}{ll}
    q_{i}          &   (i  \in Y_{0}) ,     \\
   - M     &   (i \in X_{0} \setminus I ),  \\
   + M     &   (i \in I ),  \\
   - M     &   (i \in C ),  \\
   + M     &   (i \in N \setminus (X \cup Y)  ) , \\
                     \end{array}  \right.
\quad 
p\sp{(2)}_{i}  =
   \left\{  \begin{array}{ll}
    q_{i}          &   (i  \in Y_{0}) ,     \\
   + M     &   (i \in X_{0} \setminus I ) , \\
   - M     &   (i \in I ),  \\
   - M     &   (i \in C ),  \\
   + M     &   (i \in N \setminus (X \cup Y)  ) , \\
                     \end{array}  \right.
\end{align*}
where $M$ is a sufficiently large positive number.
Then we have
\begin{align*}
g_{1}(q) &=  
 \max_{J \subseteq Y_{0}} \{ f( (X_{0} \setminus I) \cup C \cup J) - q(J) \} 
\\ & =
g(p\sp{(1)}) - M (|X_{0} \setminus I|+|C|),
\\
g_{2}(-q) &= 
 \max_{J \subseteq Y_{0}} \{  
 f(  I \cup C \cup (Y_{0} \setminus J) ) + q(J) \} 
\\
 &= 
 \max_{K \subseteq Y_{0}} \{  
 f(  I \cup C \cup K ) - q(K) \} + q(Y_{0}) 
\\ & =
g(p\sp{(2)}) - M (|I|+|C|) + q(Y_{0}). 
\end{align*}
By adding these two and using submodularity (\ref{gsubm}) of $g$,
we obtain
\begin{align}
 g_{1}(q) + g_{2}(-q)
 &= 
g(p\sp{(1)}) +g(p\sp{(2)}) - M (|X|+|C|) + q(Y_{0})
\notag \\
 &\geq 
g(p\sp{(1)} \vee p\sp{(2)}) +g(p\sp{(1)} \wedge p\sp{(2)})
 - M (|X|+|C|) + q(Y_{0}). 
\label{g1g2gg}
\end{align}
Since
\begin{align*} 
(p\sp{(1)} \vee p\sp{(2)})_{i}   =
   \left\{  \begin{array}{ll}
    q_{i}          &   (i  \in Y_{0}) ,     \\
   + M     &   (i \in X_{0} \setminus I ),  \\
   + M     &   (i \in I ),  \\
   - M     &   (i \in C ),  \\
   + M     &   (i \in N \setminus (X \cup Y)  ) , \\
                     \end{array}  \right.
\quad 
(p\sp{(1)} \wedge p\sp{(2)})_{i}   =
   \left\{  \begin{array}{ll}
    q_{i}          &   (i  \in Y_{0}) ,     \\
   - M     &   (i \in X_{0} \setminus I ) , \\
   - M     &   (i \in I ),  \\
   - M     &   (i \in C ),  \\
   + M     &   (i \in N \setminus (X \cup Y)  ) , \\
                     \end{array}  \right.
\end{align*}
we have
\begin{align} 
g(p\sp{(1)} \vee p\sp{(2)}) & \geq
f(Y) - q(Y_{0}) + M|C|,
\label{gpveep}
\\
g(p\sp{(1)} \wedge p\sp{(2)}) & \geq
f(X) + M |X|.
\label{gpwedgep}
\end{align}
The substitution of  
(\ref{gpveep}) and (\ref{gpwedgep}) 
into (\ref{g1g2gg}) yields the desired inequality
\[
g_{1}(q) + g_{2}(-q) \geq f(X) + f(Y).
\]
\vspace{-2\baselineskip}\\
\end{proof}

\begin{remark}  \rm \label{RMmatroidbases}
Among several different proofs known 
for the multiple exchange property of matroid bases (Theorem~\ref{THmultexchbase}),
the proofs of Woodall \cite{Woo74} and McDiarmid \cite{McDia75}
are based on 
minimax duality formulas for matroid rank functions
(matroid union/intersection theorems).
Our proof of Theorem~\ref{THmultexchmnat} generalizes this idea
to M$\sp{\natural}$-concave functions.
Note that the matroid union/intersection theorems 
are special cases of 
the Fenchel-type duality theorem
for M$\sp{\natural}$-concave functions 
\cite[Section 8.2.3]{Mdcasiam}, \cite[Section 5]{Mrims10}.
\end{remark}

\begin{remark}  \rm \label{RMalgforJ}
The above proof shows that
the subset $J$ in (M$\sp{\natural}$-EXC$_{\rm m}$) 
can be computed in polynomial time
by an adaptation of the valuated matroid intersection algorithm
\cite[Chapter 5]{Mspr2000}.
\end{remark}

\section{An Implication in Economics}
\label{SCecoimpl}

For a vector $p \in \RR\sp{N}$ we define
\begin{equation} \label{Dpdef}
 D(p | f) = \argmax_{X} \{ f(X) - p(X) \mid X \subseteq N \},
\end{equation}
where $p(X) = \sum_{i \in X} p_{i}$.
In economic applications
where $f$ denotes a utility (valuation) function over indivisible goods, 
$p$ is interpreted as the vector of prices and 
$D(p) = D(p | f)$ represents the demand correspondence.
We use notation $f[-p]$ for the function defined by
$f[-p](X) = f(X) - p(X)$ for $X \subseteq N$. 

Kelso and Crawford \cite{KC82}
introduced the following property for
$f: 2\sp{N} \to \RR \cup \{ -\infty \}$,
which turned out to be the key property 
in discussing economies with indivisible goods%
\footnote{
To be precise, Kelso and Crawford \cite{KC82}
as well as Gul and Stacchetti \cite{GS99} 
and Fujishige and Yang \cite{FY03}
treat the case of $f: 2\sp{N} \to \RR$,
with $\dom f = 2\sp{N}$.
It can be verified that (\ref{econImp1}) is true for 
$f: 2\sp{N} \to \RR \cup \{ -\infty \}$.
}:  
\begin{quote}
 {\bf (GS)} [Gross Substitutes property] \ 
For any vectors $p$ and $q$ 
with $p \leq q$ and $X \in D(p | f)$,
 there exists $Y \in D(q | f)$ such that
$ \{ i \in X \mid p_{i} = q_{i} \} \subseteq Y$.
\end{quote}
Gul and Stacchetti \cite{GS99} considered
the following three properties:
\begin{quote}
{\bf (SI)} [Single Improvement property] \ 
For any $p \in \RR\sp{N}$, if
$X \not\in D(p | f)$, 
 there exists $Y \subseteq N$ such that
$|X \setminus Y| \leq 1$, $|Y \setminus X| \leq 1$, 
and $f[-p](X) < f[-p](Y)$,
\end{quote}
\begin{quote}
{\bf (NC)} [No Complementarities property] \ 
For any $p \in \RR\sp{N}$, if
$X, Y \in D(p | f)$ and $I \subseteq X \setminus Y$,
there exists $J \subseteq Y \setminus X$ such that
$(X \setminus I) \cup J \in D(p | f)$,
\end{quote}
\begin{quote}
{\bf (SNC)}  [Strong No Complementarities property] \ 
For $X, Y \subseteq N$ and $I \subseteq X \setminus Y$,
there exists $J \subseteq Y \setminus X$ such that
$
f(X)+f(Y) \leq
f((X \setminus I) \cup J) +f((Y \setminus J) \cup I)
$.
\end{quote}
They showed that 
(NC) and (SI) are equivalent to (GS), and these (mutually equivalent) conditions 
are implied by (SNC).
Subsequently, 
Fujishige and Yang \cite{FY03} pointed out that
(GS) is equivalent to (M$\sp{\natural}$-EXC) for M$\sp{\natural}$-concavity.
These results are summarized schematically here as:
\begin{equation} \label{econImp1}
\mbox{(SNC)} \Longrightarrow \mbox{(NC)} 
\iff \mbox{(GS)} \iff  \mbox{(SI)} \iff  \mbox{(M$\sp{\natural}$-EXC)} .
\end{equation}
Since (SNC) and (M$\sp{\natural}$-EXC$_{\rm m}$) 
are mathematically the same, 
and (M$\sp{\natural}$-EXC$_{\rm m}$) follows from (M$\sp{\natural}$-EXC) 
by Theorem~\ref{THmultexchmnat}, 
we now see that
the above five properties are in fact equivalent:
\begin{equation} \label{econImp2}
\mbox{(SNC)} \iff \mbox{(NC)} 
\iff \mbox{(GS)} \iff  \mbox{(SI)} \iff  \mbox{(M$\sp{\natural}$-EXC)} .
\end{equation}

In this context it would be natural to consider the following 
simultaneous version of (NC):
\begin{quote}
{\bf (NCsim)} 
For any $p \in \RR\sp{N}$, if
$X, Y \in D(p | f)$ and $I \subseteq X \setminus Y$,
there exists $J \subseteq Y \setminus X$ such that
$(X \setminus I) \cup J \in D(p | f)$ and $(Y \setminus J) \cup I \in D(p | f)$.
\end{quote}
Obviously,
$\mbox{(SNC)} \Longrightarrow   \mbox{(NCsim)}$
and 
$\mbox{(NCsim)} \Longrightarrow  \mbox{(NC)}$.
Hence (NCsim) is also equivalent to (GS).

We conclude this section by stating the equivalence of all the six properties as a theorem.

\begin{theorem} \label{THecomequiv}
For a function 
$f: 2\sp{N} \to \RR \cup \{ -\infty \}$,
we have the following equivalence:
\[ 
\mbox{\rm (M$\sp{\natural}$-EXC$_{\rm m}$)}=
\mbox{\rm (SNC)} \iff \mbox{\rm (NCsim)} \iff \mbox{\rm (NC)} 
\iff \mbox{\rm (GS)} \iff  \mbox{\rm (SI)} \iff  \mbox{\rm (M$\sp{\natural}$-EXC)} .
\] 
\end{theorem}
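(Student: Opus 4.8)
The plan is to read Theorem~\ref{THecomequiv} as the routine task of inserting the one new condition (NCsim) into the cycle of equivalences already assembled in Section~\ref{SCecoimpl}; no fresh mathematical input is needed beyond Theorem~\ref{THmultexchmnat}. I would first discard the degenerate case $\dom f=\emptyset$ (then $f\equiv-\infty$ and all six conditions hold trivially), so that $\dom f\neq\emptyset$ and (M$\sp{\natural}$-EXC) coincides with the defining exchange property of an M$\sp{\natural}$-concave function.

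Next I would recall, and use as a black box, the five-way equivalence \eqref{econImp2},
\[
\mbox{(SNC)} \iff \mbox{(NC)} \iff \mbox{(GS)} \iff \mbox{(SI)} \iff \mbox{(M$\sp{\natural}$-EXC)} ,
\]
which is obtained by splicing together the implications of Gul--Stacchetti \cite{GS99} (namely (SNC) $\Rightarrow$ (NC) $\iff$ (GS) $\iff$ (SI)), the equivalence (GS) $\iff$ (M$\sp{\natural}$-EXC) of Fujishige--Yang \cite{FY03}, and the closing implication (M$\sp{\natural}$-EXC) $\Rightarrow$ (SNC). The last of these is the only link carrying real content: under (M$\sp{\natural}$-EXC) the function $f$ is M$\sp{\natural}$-concave, so Theorem~\ref{THmultexchmnat} delivers (M$\sp{\natural}$-EXC$_{\rm m}$), which is word-for-word (SNC) --- this also accounts for the equality sign in the statement of the theorem. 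That link is exactly what Section~\ref{SCproof} establishes through the Fenchel-type duality theorem.

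It then remains only to locate (NCsim) between (SNC) and (NC) in the order of implications. The implication (NCsim) $\Rightarrow$ (NC) is trivial, obtained by discarding the clause $(Y\setminus J)\cup I\in D(p|f)$. For (SNC) $\Rightarrow$ (NCsim), fix $p\in\RR\sp{N}$; since $I\subseteq X\setminus Y$ and $J\subseteq Y\setminus X$, the sets $(X\setminus I)\cup J$ and $(Y\setminus J)\cup I$ are genuine disjoint unions and together use every element of $N$ exactly as often as $X$ and $Y$ do, so that $p((X\setminus I)\cup J)+p((Y\setminus J)\cup I)=p(X)+p(Y)$; consequently (SNC) for $f$ and (SNC) for the translate $f[-p]$ say the same thing. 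Applying the latter to $X,Y\in D(p|f)=\argmax f[-p]$ produces $J\subseteq Y\setminus X$ with
\[
2m = f[-p](X)+f[-p](Y) \le f[-p]((X\setminus I)\cup J)+f[-p]((Y\setminus J)\cup I) \le 2m ,
\]
where $m$ denotes the maximum value of $f[-p]$; equality is forced, so both $(X\setminus I)\cup J$ and $(Y\setminus J)\cup I$ lie in $\argmax f[-p]=D(p|f)$, which is (NCsim). (Equivalently, this is Theorem~\ref{THmultexchmnatmax} applied to the M$\sp{\natural}$-concave function $f[-p]$.) Since (SNC) $\iff$ (NC) by \eqref{econImp2}, the chain (SNC) $\Rightarrow$ (NCsim) $\Rightarrow$ (NC) forces (NCsim) into the common equivalence class, and the proof is complete.

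I do not expect a real obstacle: the entire weight rests on Theorem~\ref{THmultexchmnat}, already handled in Section~\ref{SCproof}, while everything above is bookkeeping. The only points deserving a moment's care are the trivial reduction $\dom f=\emptyset$ and the verification that the price terms cancel when (SNC) is transported from $f$ to $f[-p]$.
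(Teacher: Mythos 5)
Your proposal is correct and follows essentially the same route as the paper: it splices the Gul--Stacchetti and Fujishige--Yang equivalences with the implication (M$\sp{\natural}$-EXC) $\Rightarrow$ (SNC) supplied by Theorem~\ref{THmultexchmnat}, and then sandwiches (NCsim) between (SNC) and (NC). The only difference is that you spell out the price-cancellation argument behind (SNC) $\Rightarrow$ (NCsim), which the paper dismisses as obvious (it is, in effect, Theorem~\ref{THmultexchmnatmax} applied to $f[-p]$, exactly as you note).
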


\section{A Direct Proof of Proposition \ref{PRmultTOusual}}
\label{SCproofconverse}

The proof of
``{\rm (M$\sp{\natural}$-EXC$_{\rm m}$)} $\Longrightarrow$ {\rm (M$\sp{\natural}$-EXC)}''
consists of the following propositions,
which refer to the following conditions
for a set family $\mathcal{F} \subseteq 2\sp{N}$:

\begin{quote}
\mbox{\bf (B$\sp{\natural}$-EXC)} 
For any $X, Y \in \mathcal{F}$ and $i \in X \setminus Y$, \ 
we have
(i) $X - i \in \mathcal{F}$, $ Y + i \in \mathcal{F}$ 
\ or \  
\\
(ii) there exists some $j \in Y \setminus X$ such that
$X - i +j \in \mathcal{F}$, $ Y + i -j  \in \mathcal{F}$,
\end{quote}

\begin{quote}
\mbox{\bf (B$\sp{\natural}$-EXC$_{\rm m}$)} 
For any $X, Y \in \mathcal{F}$ and $I \subseteq X \setminus Y$,
there exists $J \subseteq Y \setminus X$ such that
$(X \setminus I) \cup J \in \mathcal{F}$ and $(Y \setminus J) \cup I \in \mathcal{F}$.
\end{quote}

\begin{proposition} \label{PRmultdom} 
If $f$ satisfies {\rm (M$\sp{\natural}$-EXC$_{\rm m}$)}, then
$\dom f$ satisfies {\rm (B$\sp{\natural}$-EXC$_{\rm m}$)}.
\end{proposition}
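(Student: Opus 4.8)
The plan is to deduce (B$\sp{\natural}$-EXC$_{\rm m}$) for $\dom f$ directly from (M$\sp{\natural}$-EXC$_{\rm m}$) by observing that $-\infty$ values in the exchange inequality force membership in the effective domain. Concretely, let $X, Y \in \dom f$ and $I \subseteq X \setminus Y$. Applying (M$\sp{\natural}$-EXC$_{\rm m}$) to these $X$, $Y$, $I$ gives a subset $J \subseteq Y \setminus X$ with
\[
f(X) + f(Y) \leq f((X \setminus I) \cup J) + f((Y \setminus J) \cup I).
\]
Since $X, Y \in \dom f$, the left-hand side is a finite real number, so the right-hand side cannot be $-\infty$. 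By the convention $(-\infty) + a = -\infty$, this forces both terms on the right to be finite, i.e. $(X \setminus I) \cup J \in \dom f$ and $(Y \setminus J) \cup I \in \dom f$. That is exactly the conclusion of (B$\sp{\natural}$-EXC$_{\rm m}$) for $\mathcal{F} = \dom f$.

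The only step needing a word of care is the degenerate case where $\dom f$ has one or two elements, or where $X \setminus Y$ or $Y \setminus X$ is empty — but these are handled automatically: if $I = \emptyset$ we may take $J = \emptyset$ (and indeed the inequality holds trivially with that choice since $f(X) + f(Y) \le f(X) + f(Y)$, though formally we just invoke (M$\sp{\natural}$-EXC$_{\rm m}$) as a black box), and if $Y \setminus X = \emptyset$ then $J = \emptyset$ is the only candidate and the displayed inequality still yields $(X \setminus I) \cup J = X \setminus I \in \dom f$ and $Y \cup I \in \dom f$. No matroid theory or discrete convex analysis is actually required for this particular proposition; it is a purely formal consequence of the $-\infty$ arithmetic conventions laid down after (M$\sp{\natural}$-EXC).

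I do not anticipate a genuine obstacle here: the proposition is essentially a bookkeeping observation that the quantitative multiple exchange inequality, when both sides of the starting pair are finite, propagates finiteness to the exchanged pair. The subtlety the paper flags for Proposition~\ref{PRmultTOusual} as a whole — namely that for a singleton $I = \{i\}$ one can find $J$ with $|J| \le 1$ — is not needed at this stage; it will be supplied separately via results on the structure of $\mathcal{F}$ satisfying (B$\sp{\natural}$-EXC$_{\rm m}$). So the proof of Proposition~\ref{PRmultdom} is short: quote (M$\sp{\natural}$-EXC$_{\rm m}$), note the finiteness of $f(X) + f(Y)$, and read off domain membership from the impossibility of the right-hand side being $-\infty$.
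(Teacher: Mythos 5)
Your proof is correct and is exactly the argument the paper intends: the paper dismisses Proposition~\ref{PRmultdom} as obvious, and the content of "obvious" is precisely your observation that finiteness of $f(X)+f(Y)$ together with the convention $(-\infty)+a=-\infty$ forces both exchanged sets into $\dom f$. Nothing further is needed.
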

\begin{proof}
This is obviousD
\end{proof}

\begin{proposition} \label{PRmultexchset}
For a set family $\mathcal{F}$,
{\rm (B$\sp{\natural}$-EXC$_{\rm m}$)} $\Longrightarrow$ {\rm (B$\sp{\natural}$-EXC)}.
\end{proposition}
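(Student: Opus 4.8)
The plan is to deduce the single-exchange property (B$\sp{\natural}$-EXC) from the multiple-exchange property (B$\sp{\natural}$-EXC$_{\rm m}$) by instantiating the latter with the singleton $I = \{ i \}$ and then arguing that the resulting $J \subseteq Y \setminus X$ can be taken with $|J| \leq 1$. So fix $X, Y \in \mathcal{F}$ and $i \in X \setminus Y$. Applying (B$\sp{\natural}$-EXC$_{\rm m}$) with $I = \{ i \}$ gives some $J \subseteq Y \setminus X$ with $(X \setminus i) \cup J \in \mathcal{F}$ and $(Y + i) \setminus J \in \mathcal{F}$. If $|J| \leq 1$ we are immediately in case (i) (when $J = \emptyset$) or case (ii) (when $J = \{ j \}$), so the whole issue is the case $|J| \geq 2$.

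The key step is therefore a minimality/reduction argument: among all subsets $J \subseteq Y \setminus X$ that witness the multiple exchange for this particular $X, Y, i$ — i.e. satisfy $(X - i) \cup J \in \mathcal{F}$ and $(Y + i) \setminus J \in \mathcal{F}$ — choose one of minimum cardinality, and suppose for contradiction $|J| \geq 2$. I would then apply (B$\sp{\natural}$-EXC$_{\rm m}$) again, but now to the pair of sets $X' = (X - i) \cup J \in \mathcal{F}$ and $Y' = (Y + i) \setminus J \in \mathcal{F}$, with a carefully chosen singleton $I' = \{ j \}$ for some $j \in J \subseteq X' \setminus Y'$. This produces $J' \subseteq Y' \setminus X'$ with $(X' \setminus j) \cup J' \in \mathcal{F}$ and $(Y' + j) \setminus J' \in \mathcal{F}$. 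The aim is to show that $(J \setminus j) \cup J'$ is again a witness for the original triple $(X, Y, i)$ but of strictly smaller size, or else to reach (B$\sp{\natural}$-EXC) directly; one has to track that $Y' \setminus X' \subseteq Y \setminus X$ and that removing $j$ from $X'$ and re-adding appropriate elements keeps us inside the two membership conditions for $X$ and $Y$. Since this is exactly the classical passage from the multiple to the single exchange property for matroid bases (as in Woodall, McDiarmid, or Schrijver), I would also consider simply reducing to that known result: (B$\sp{\natural}$-EXC) restricted to equi-cardinal families is the matroid base exchange axiom, and the general M$\sp{\natural}$ case follows by the standard lift to a matroid on $N \times \{ 0, 1 \}$ or by the correspondence between M$\sp{\natural}$-exchange and matroid exchange.

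The main obstacle I anticipate is not the singleton instantiation but the bookkeeping in the induction on $|J|$: when I re-apply (B$\sp{\natural}$-EXC$_{\rm m}$) to $(X', Y', \{ j \})$, the returned set $J'$ lives in $Y' \setminus X' = (Y \setminus X) \setminus J$ (together with, possibly, $\{ i \}$ if $i$ re-enters — which it does not, since $i \notin Y'$), and I must verify that the combined set $(J \setminus j) \cup J'$ genuinely satisfies both $(X - i) \cup [(J \setminus j) \cup J'] \in \mathcal{F}$ and $(Y + i) \setminus [(J \setminus j) \cup J'] \in \mathcal{F}$. The first is immediate because $(X' \setminus j) \cup J' = (X - i) \cup (J \setminus j) \cup J'$; the second requires identifying $(Y' + j) \setminus J'$ with $(Y + i) \setminus [(J \setminus j) \cup J']$, which holds precisely because $j \in J$ and $J' \subseteq (Y \setminus X) \setminus J$ are disjoint from the relevant sets. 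Once these two identifications are checked, $(J \setminus j) \cup J'$ has size $|J| - 1 + |J'|$; if $|J'| = 0$ this contradicts minimality, and if $|J'| \geq 1$ one iterates or argues by a secondary induction — so the delicate part is organizing the induction so that it genuinely terminates, which I expect to handle by inducting on $|J|$ with the base cases $|J| \leq 1$ already disposed of above.
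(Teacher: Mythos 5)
There is a genuine gap, and in fact one concrete error. Writing $X' = (X - i) \cup J$ and $Y' = (Y + i) \setminus J$, you claim that $i \notin Y'$; but $i \notin J$ (since $J \subseteq Y \setminus X$ and $i \in X$), so $i \in Y'$ and indeed $i \in Y' \setminus X' = ((Y \setminus X) \setminus J) \cup \{ i \}$. Consequently the set $J'$ returned by (B$\sp{\natural}$-EXC$_{\rm m}$) for $(X', Y', \{j\})$ may contain $i$, in which case $(X' - j) \cup J'$ contains $i$ and $(Y' \setminus J') + j$ loses $i$, so $(J \setminus j) \cup J'$ is not a witness for the original triple $(X, Y, i)$ at all. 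Even setting that aside, the step you yourself flag as delicate is exactly where the proof fails to close: when $|J'| \geq 1$ the combined candidate has size $|J| - 1 + |J'|$, which need not be smaller than $|J|$, so minimality of $|J|$ gives no contradiction and there is no visible decreasing quantity for the "secondary induction." The fallback of reducing to the classical matroid statement does not help either: the classical theorems go from single to multiple exchange, whereas here you must go the other way for a family not yet known to be a (generalized) matroid, and the lift to an equi-cardinal family on $N \times \{0,1\}$ would itself require showing that the lifted family inherits the multiple exchange property.

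The paper avoids both difficulties by not attacking the simultaneous condition directly. It first proves only the split condition (B$\sp{\natural}$-EXC$_{\pm}$), in which the element serving the $X$-side and the element serving the $Y$-side are allowed to differ, and then invokes the Tardos / Murota--Shioura equivalence of (B$\sp{\natural}$-EXC$_{\pm}$) with (B$\sp{\natural}$-EXC). Moreover, even for the weaker split condition the argument needs a double minimality --- first over counterexample pairs minimizing $|X \Delta Y|$, then over witnesses minimizing $|J|$ --- and the case analysis uses the first minimality to handle exactly the situations where your single induction stalls (the pairs $(X \cup (J - j), Y)$ and $(X, Y \setminus (J - j))$ have strictly smaller symmetric difference, so the exchange property may be assumed for them). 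Your proposal contains neither the second induction parameter nor any substitute for the Tardos equivalence step, so as written it does not constitute a proof.
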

\begin{proof}
The proof is given in Section \ref{SCgexcmTOgexc0}.
\end{proof}

\begin{proposition} \label{PRmultTOusualDomCond}
If $\dom f$ satisfies {\rm (B$\sp{\natural}$-EXC)}, then
{\rm (M$\sp{\natural}$-EXC$_{\rm m}$)} $\Longrightarrow$ {\rm (M$\sp{\natural}$-EXC)}.
\end{proposition}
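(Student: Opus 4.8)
The plan is to assume that $\dom f$ already satisfies (B$\sp\natural$-EXC), assume $f$ satisfies (M$\sp\natural$-EXC$_{\rm m}$), and deduce (M$\sp\natural$-EXC). Fix $X, Y \in \dom f$ and $i \in X \setminus Y$; we must produce the exchange of type (i) or (ii) in the definition of M$\sp\natural$-concavity. Apply (M$\sp\natural$-EXC$_{\rm m}$) with the singleton $I = \{ i \}$: there is a subset $J \subseteq Y \setminus X$ with
\[
f(X) + f(Y) \leq f((X \setminus \{ i \}) \cup J) + f((Y \setminus J) \cup \{ i \}).
\]
In particular $(X - i) \cup J$ and $(Y + i) \setminus J$ both lie in $\dom f$. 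If $|J| \leq 1$ we are immediately in case (i) (when $J = \emptyset$) or case (ii) (when $J = \{ j \}$), so the whole issue is to reduce to the situation $|J| \leq 1$. The first step is therefore to invoke Proposition \ref{PRmultexchset}, which tells us (B$\sp\natural$-EXC) holds for $\dom f$; this is the hypothesis we are explicitly given, and it is the tool that controls $\dom f$.

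The key step is to show that among all $J \subseteq Y \setminus X$ realizing the inequality above, one may choose $J$ of minimum cardinality and then argue $|J| \leq 1$ using (B$\sp\natural$-EXC) of $\dom f$ together with the M$\sp\natural$-concavity inequalities applied \emph{inside} the pair of sets $(X - i) \cup J$ and $(Y + i) \setminus J$. Concretely, suppose for contradiction $|J| \geq 2$ and pick two distinct elements $j_1, j_2 \in J$. The idea is to set $U = (X - i) \cup J \in \dom f$ and $V = (Y + i) \setminus J \in \dom f$, note $j_1 \in U \setminus V$, and apply (M$\sp\natural$-EXC) to the pair $(U, V)$ with the element $j_1$: either we can move $j_1$ from $U$ to $V$ directly, or we can swap $j_1$ out of $U$ for some $j' \in V \setminus U$. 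Summing the resulting inequality $f(U) + f(V) \leq f(U') + f(V')$ with the original inequality gives $f(X) + f(Y) \leq f(U') + f(V')$ where the new pair corresponds to a strictly smaller choice of $J$ (we have removed $j_1$ from the part of $J$ that sits on the $X$-side), contradicting minimality — provided the element $j'$ that comes in does not reintroduce size, which requires a short case analysis showing $j'$ can be absorbed into $C = X \cap Y$ or lies outside the relevant sets. The role of (B$\sp\natural$-EXC) for $\dom f$ is to guarantee that the auxiliary sets produced in these exchange steps stay in $\dom f$ so that the $f$-inequalities of (M$\sp\natural$-EXC) are actually available at each stage.

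I expect the main obstacle to be exactly this bookkeeping: after one application of (M$\sp\natural$-EXC) to $(U,V)$, the incoming element $j'$ might lie in $Y \setminus X$ and thus re-enlarge the effective $J$, so a naive minimality argument does not close immediately. The fix is to choose the minimal $J$ with respect to a finer measure — e.g. minimize $|J|$ and, among those, pick $J$ so that the pair $((X-i)\cup J,\ (Y+i)\setminus J)$ is extremal in a secondary sense — or to run an exchange argument on $\dom f$ alone first (using Theorem \ref{THmultexchbase}-type reasoning for the "layer" of $\dom f$ of the appropriate cardinality, as guaranteed by (B$\sp\natural$-EXC)) to pin down the support of a valid $J$, and only then feed the $f$-inequality in. Either route is routine once set up, but the order matters: first secure (B$\sp\natural$-EXC) for $\dom f$ via Proposition \ref{PRmultexchset}, then pick a cardinality-minimal $J$ from (M$\sp\natural$-EXC$_{\rm m}$), then eliminate the case $|J| \geq 2$ by a single-element exchange combined with the summing trick, and finally read off (i) or (ii) from the surviving $|J| \leq 1$.
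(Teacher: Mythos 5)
There is a genuine gap, and it sits exactly at the step you flag as "routine once set up." Your plan is to take a cardinality-minimal $J$ from (M$\sp{\natural}$-EXC$_{\rm m}$) applied to $(X,Y,\{i\})$ and, assuming $|J|\geq 2$, to apply ``(M$\sp{\natural}$-EXC) to the pair $(U,V)=((X-i)\cup J,\ (Y+i)\setminus J)$'' to peel one element off $J$. But (M$\sp{\natural}$-EXC) is precisely the conclusion you are trying to establish, so invoking it for the auxiliary pair $(U,V)$ is circular. The only non-circular substitute available is (M$\sp{\natural}$-EXC$_{\rm m}$) with the singleton $\{j_1\}$ applied to $(U,V)$ --- and that returns some $J'\subseteq V\setminus U$ of \emph{uncontrolled size}, so the resulting pair corresponds to the new exchange set $(J-j_1)\cup J'$ (possibly also involving $i$ if $i\in J'$), which need not be smaller than $J$. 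This is not a bookkeeping nuisance that a secondary tie-breaking rule fixes; it is the same difficulty you started with, reproduced one level down, and your proposal offers no mechanism to break the regress. Note also that knowing the auxiliary sets lie in $\dom f$ (via (B$\sp{\natural}$-EXC), which in any case is a hypothesis here, not something to re-derive from Proposition \ref{PRmultexchset}) gives you only membership, not the value inequalities you need.

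The paper avoids this entirely by a different route: it invokes the local characterization of M$\sp{\natural}$-concavity (Theorem \ref{THmconcavlocexc}), by which, once $\dom f$ satisfies (B$\sp{\natural}$-EXC), it suffices to verify three inequalities involving sets within distance two of a common base set. For the first two, (M$\sp{\natural}$-EXC$_{\rm m}$) applied to $(X+i+j,X,i)$ and $(X+i+j,X+k,i)$ forces $J$ to be $\emptyset$ or a singleton automatically, so no minimality argument is needed. The third inequality (the four-element case) is handled by contradiction: assuming it fails, a chain of applications of (M$\sp{\natural}$-EXC$_{\rm m}$) to carefully chosen triples shows the four quantities $g(1)+g(234),\dots,g(4)+g(123)$ are all equal to a common value $\alpha$, and summing four further exchange inequalities yields $4\alpha\leq 2[g(12)+g(34)]+[g(13)+g(24)]+[g(14)+g(23)]$, contradicting the assumed failure. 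If you want to salvage your direct approach you would need to supply the missing descent argument; as written, the proof does not close.
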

\begin{proof}
The proof is given in Section \ref{SCmnatmTOmnat0}.
\end{proof}

\subsection{Proof of Proposition \ref{PRmultexchset}}
\label{SCgexcmTOgexc0}

We make use of the following proposition
(Tardos \cite[Theorem 2.3]{Tar85}, Murota--Shioura \cite[Remark 5.2]{MS99gp}),
where
\begin{quote}
\mbox{\bf (B$\sp{\natural}$-EXC$_{\pm}$)} 
For any $X, Y \in \mathcal{F}$ and $i \in X \setminus Y$, \ 
both (a) and (b) hold, where
\\ \quad
(a) (i)
 $X - i \in \mathcal{F}$
\ or \ (ii) 
$X - i +j \in \mathcal{F}$
for some $j \in Y \setminus X$;
\\ \quad
(b) (i)
$ Y + i \in \mathcal{F}$ 
\ or \ (ii) 
$ Y + i - k  \in \mathcal{F}$
for some $k \in Y \setminus X$.
\end{quote}

\begin{proposition}
\label{PRgmattardos}
The following three conditions are equivalent.

{\rm (i)}
$\mathcal{F}$ is a generalized matroid.

{\rm (ii)}
$\mathcal{F}$ satisfies {\rm (B$\sp{\natural}$-EXC$_{\pm}$)}.

{\rm (iii)}
$\mathcal{F}$ satisfies {\rm (B$\sp{\natural}$-EXC)}.
\end{proposition}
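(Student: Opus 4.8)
The plan is to prove the three conditions equivalent along the cycle (i) $\Rightarrow$ (iii) $\Rightarrow$ (ii) $\Rightarrow$ (i), so that all the real work sits in the last implication. The implication (iii) $\Rightarrow$ (ii) is immediate: given $X, Y \in \mathcal{F}$ and $i \in X \setminus Y$, if alternative~(i) of (B$\sp{\natural}$-EXC) holds then $X - i \in \mathcal{F}$ supplies alternative (a)(i) and $Y + i \in \mathcal{F}$ supplies (b)(i), while if alternative~(ii) holds with witness $j$ then $X - i + j \in \mathcal{F}$ supplies (a)(ii) and $Y + i - j \in \mathcal{F}$ supplies (b)(ii) with $k = j$; hence (B$\sp{\natural}$-EXC$_{\pm}$) holds.

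For (i) $\Rightarrow$ (iii) I would invoke the standard description of a generalized matroid as the set of $0$-$1$ points of an integral generalized polymatroid $Q(\mu,\rho) = \{\, x : \mu(A) \le x(A) \le \rho(A)\ (A \subseteq N) \,\}$, with $\mu$ supermodular, $\rho$ submodular, and the cross-inequality $\rho(A) - \rho(A \setminus B) \ge \mu(B) - \mu(B \setminus A)$; equivalently, it is an M$\sp{\natural}$-convex set. For such a family the simultaneous exchange property (B$\sp{\natural}$-EXC) is exactly the set version of (M$\sp{\natural}$-EXC) and is well known. If a self-contained argument is wanted, one adjoins a new element $0$ with a large capacity $\beta$ to form an ordinary integral polymatroid on $N \cup \{0\}$ whose base polytope projects onto $Q(\mu,\rho)$, and then reads off (B$\sp{\natural}$-EXC) from the symmetric exchange property of polymatroid bases specialized to $0$-$1$ points and projected back to $N$.

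The main work is (ii) $\Rightarrow$ (i), where the plan is to reconstruct the generalized-polymatroid structure directly from the decoupled axiom. Set $\rho(A) = \max_{X \in \mathcal{F}} |X \cap A|$ and $\mu(A) = \min_{X \in \mathcal{F}} |X \cap A|$. I would then prove, in order: (1) $\rho$ is submodular and $\mu$ is supermodular, each by taking near-optimizers $X, X' \in \mathcal{F}$ for two sets $A, A'$ and repeatedly applying part~(a) (respectively part~(b)) of (B$\sp{\natural}$-EXC$_{\pm}$) to uncross them; (2) the cross-inequality relating $\rho$ and $\mu$, after which (1)--(2) make $Q(\mu,\rho)$ an integral generalized polymatroid with $\mathcal{F} \subseteq Q(\mu,\rho) \cap \{0,1\}\sp{N}$ automatic; and (3) the reverse inclusion $Q(\mu,\rho) \cap \{0,1\}\sp{N} \subseteq \mathcal{F}$, by an augmentation argument: from any $X \in \mathcal{F}$ and a target set $Z$ with $\chi_Z \in Q(\mu,\rho)$, apply (B$\sp{\natural}$-EXC$_{\pm}$) repeatedly to move $X$ one element at a time toward $Z$ without leaving $\mathcal{F}$, using $|X \setminus Z| + |Z \setminus X|$ as a strictly decreasing potential. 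Once (1)--(3) hold, $\mathcal{F}$ is a generalized matroid by definition.

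The hard part will be step~(3) (and, to a lesser extent, step~(2)) of that last implication: the axiom (B$\sp{\natural}$-EXC$_{\pm}$) is visibly weaker than (B$\sp{\natural}$-EXC), since it only permits pushing from the $X$-side or the $Y$-side \emph{in isolation}, and one must show that these one-sided moves already suffice to realize every integral point of $Q(\mu,\rho)$ — and hence to recover the coupled exchange. Routing the argument through the polytope $Q(\mu,\rho)$ and its rank pair $(\mu,\rho)$, rather than trying to coordinate the two one-sided exchanges by hand, is the device that makes this tractable; this coordination is the substance of the results of Tardos and of Murota--Shioura.
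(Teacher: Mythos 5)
First, note that the paper itself offers no proof of this proposition: it is quoted as a known result, with the attribution to Tardos [Theorem 2.3] and Murota--Shioura [Remark 5.2] given immediately before the statement. So the comparison is really against those sources rather than against an argument in the paper. Your cycle (i) $\Rightarrow$ (iii) $\Rightarrow$ (ii) $\Rightarrow$ (i) is a legitimate way to organize the equivalence, and the two easy legs are fine: (B$\sp{\natural}$-EXC) $\Rightarrow$ (B$\sp{\natural}$-EXC$_{\pm}$) is indeed immediate, and (i) $\Rightarrow$ (iii) is the standard exchange property of M$\sp{\natural}$-convex sets (the $0$--$1$ points of integral g-polymatroids), provable by the lifting to a polymatroid base family that you describe.

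The gap is in (ii) $\Rightarrow$ (i), which you only outline, and the outline has a concrete flaw in your step (3): the axiom (B$\sp{\natural}$-EXC$_{\pm}$) can only be invoked for a \emph{pair of members of} $\mathcal{F}$, so you cannot ``move $X$ toward $Z$'' by exchanges when $Z$ is merely a lattice point of $Q(\mu,\rho)$ whose membership in $\mathcal{F}$ is precisely what is to be proved. The standard repair is to take $X \in \mathcal{F}$ minimizing $|X \mathbin{\Delta} Z|$ and derive a contradiction from the inequalities $\mu(A) \leq |Z \cap A| \leq \rho(A)$ together with auxiliary members of $\mathcal{F}$ attaining $\rho$ and $\mu$ on suitably chosen sets; this, together with your step (1), is the actual substance of Tardos's theorem, and your write-up explicitly defers it to the literature. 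Since the only part of the proposition that the paper actually uses is the implication (ii) $\Rightarrow$ (iii) (it is the bridge in Section 5 from (B$\sp{\natural}$-EXC$_{\rm m}$) to (B$\sp{\natural}$-EXC)), and your route reaches that implication only by passing through the unproved (ii) $\Rightarrow$ (i), the proposal as it stands does not establish the proposition: it is a reasonable plan whose decisive step is a citation. A minor further point: the paper never defines ``generalized matroid,'' and under Tardos's own definition (i) and (ii) are essentially definitionally equivalent, so the weight of the proposition sits on (ii) $\Rightarrow$ (iii) rather than where your cycle places it.
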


With this equivalence, 
the statement
``{\rm (B$\sp{\natural}$-EXC$_{\rm m}$)} $\Longrightarrow$ {\rm (B$\sp{\natural}$-EXC)}''
in Proposition \ref{PRmultexchset}
is immediate from the following.

\begin{proposition} \label{PRgexcmTOgexc0}
{\rm (B$\sp{\natural}$-EXC$_{\rm m}$)} $\Longrightarrow$ {\rm (B$\sp{\natural}$-EXC$_{\pm}$)}.
\end{proposition}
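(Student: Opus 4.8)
The plan is to prove the two conjuncts (a) and (b) of {\rm (B$\sp{\natural}$-EXC$_{\pm}$)} separately, each by a minimal-counterexample argument that invokes {\rm (B$\sp{\natural}$-EXC$_{\rm m}$)} twice: once on the given pair to see that a suitable witness set exists at all, and a second time on a freshly constructed pair to squeeze the witness set down. Fix $X, Y \in \mathcal{F}$ and $i \in X \setminus Y$ throughout.

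For part (a) I would introduce $\mathcal{J} = \{\, J \subseteq Y \setminus X : (X - i) \cup J \in \mathcal{F} \,\}$. Applying {\rm (B$\sp{\natural}$-EXC$_{\rm m}$)} to $X$, $Y$ and $I = \{i\}$ shows $\mathcal{J} \neq \emptyset$, and by definition (a) is exactly the statement that $\mathcal{J}$ contains a set of cardinality at most one. Suppose it does not, and let $J^{*} \in \mathcal{J}$ be of minimum cardinality, so $|J^{*}| \geq 2$; fix $j_{1} \in J^{*}$ and set $X_{1} = (X - i) \cup J^{*} \in \mathcal{F}$. Since $J^{*} \subseteq Y \setminus X$ and $i \notin J^{*}$, one checks $X_{1} \setminus X = J^{*}$ and $X \setminus X_{1} = \{i\}$. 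Now I would apply {\rm (B$\sp{\natural}$-EXC$_{\rm m}$)} to the pair $(X_{1}, X)$ with $I = J^{*} \setminus \{j_{1}\} \subseteq X_{1} \setminus X$; it returns $J' \subseteq X \setminus X_{1} = \{i\}$ with $(X_{1} \setminus I) \cup J' = (X - i + j_{1}) \cup J' \in \mathcal{F}$ and $(X \setminus J') \cup (J^{*} \setminus \{j_{1}\}) \in \mathcal{F}$. If $J' = \emptyset$ then $X - i + j_{1} \in \mathcal{F}$, i.e.\ $\{j_{1}\} \in \mathcal{J}$, contradicting the assumption; if $J' = \{i\}$ then $(X - i) \cup (J^{*} \setminus \{j_{1}\}) \in \mathcal{F}$, i.e.\ $J^{*} \setminus \{j_{1}\} \in \mathcal{J}$, contradicting the minimality of $|J^{*}|$. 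Either way (a) follows.

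Part (b) I would treat by the dual bookkeeping: set $\mathcal{K} = \{\, K \subseteq Y \setminus X : (Y \setminus K) + i \in \mathcal{F} \,\}$, note $\mathcal{K} \neq \emptyset$ again from {\rm (B$\sp{\natural}$-EXC$_{\rm m}$)} applied to $X$, $Y$, $\{i\}$, and observe that (b) says $\mathcal{K}$ contains a set of cardinality at most one. If not, take $K^{*} \in \mathcal{K}$ of minimum cardinality ($|K^{*}| \geq 2$), fix $k_{1} \in K^{*}$, and set $Y_{1} = (Y \setminus K^{*}) + i \in \mathcal{F}$, so that $Y \setminus Y_{1} = K^{*}$ and $Y_{1} \setminus Y = \{i\}$. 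Applying {\rm (B$\sp{\natural}$-EXC$_{\rm m}$)} to $(Y, Y_{1})$ with $I = K^{*} \setminus \{k_{1}\}$ yields $J' \subseteq Y_{1} \setminus Y = \{i\}$ with $(Y \setminus I) \cup J' \in \mathcal{F}$ and $(Y_{1} \setminus J') \cup I \in \mathcal{F}$; the case $J' = \emptyset$ makes the second set equal $(Y + i) - k_{1}$, hence $\{k_{1}\} \in \mathcal{K}$, while the case $J' = \{i\}$ makes the first set equal $(Y \setminus (K^{*} \setminus \{k_{1}\})) + i$, hence $K^{*} \setminus \{k_{1}\} \in \mathcal{K}$ --- both contradictions, so (b) holds and the proposition is proved.

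The one genuinely non-obvious step --- the hard part --- is the choice of the pair fed to {\rm (B$\sp{\natural}$-EXC$_{\rm m}$)} the second time. Re-applying it to the original pair $(X, Y)$ leads nowhere, since it just returns another arbitrary subset of $Y \setminus X$; the device is to apply it to $(X_{1}, X)$ (respectively $(Y, Y_{1})$), a pair in which one side of the symmetric difference has been engineered down to the single element $\{i\}$, so that the returned $J'$ is forced to be $\emptyset$ or $\{i\}$ and the two-case split closes. Everything else is elementary manipulation of the $\pm i$ notation, which I have sketched above and would not spell out further.
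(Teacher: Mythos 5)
Your proof is correct, and it takes a genuinely different route from the paper's. The paper argues by contradiction on a violating pair $(X,Y)$ chosen with $|X \Delta Y|$ minimum, takes $J$ from (B$\sp{\natural}$-EXC$_{\rm m}$) applied to $(X,Y,\{i\})$ with $|J|$ minimum, then applies (B$\sp{\natural}$-EXC$_{\rm m}$) to the two derived pairs $((X-i)\cup J,\, X)$ and $(Y,\,(Y\setminus J)+i)$ with a singleton $\{j\}$, and closes a four-case analysis in which two of the cases (X1--Y0 and X0--Y1) genuinely need the outer minimality of $|X\Delta Y|$. You instead exploit the fact that (B$\sp{\natural}$-EXC$_{\pm}$) is a conjunction of two independent one-sided statements, and prove each by a single minimal-witness argument. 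The key device --- feeding (B$\sp{\natural}$-EXC$_{\rm m}$) a second time a pair one side of whose symmetric difference has been engineered down to the singleton $\{i\}$, so the returned set is forced into $\{\emptyset,\{i\}\}$ --- is shared with the paper, but your choice of $I=J^{*}\setminus\{j_{1}\}$ (rather than a singleton $\{j\}$) makes each of the two forced outcomes contradict either the nonexistence of a witness of size at most one or the minimality of $|J^{*}|$ directly; the computations $(X_{1}\setminus I)\cup J' \in \{X-i+j_{1},\, X+j_{1}\}$ and $(X\setminus J')\cup I \in \{X\cup(J^{*}\setminus\{j_{1}\}),\,(X-i)\cup(J^{*}\setminus\{j_{1}\})\}$ (and their analogues for $\mathcal{K}$) all check out. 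This eliminates the outer induction on $|X\Delta Y|$ and all cross-bookkeeping between the $X$-side and the $Y$-side, yielding a shorter and more local proof; nothing is lost by decoupling (a) from (b), since both your argument and the paper's must in any case invoke Proposition \ref{PRgmattardos} to pass from (B$\sp{\natural}$-EXC$_{\pm}$) to (B$\sp{\natural}$-EXC).
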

\begin{proof}
We prove by contradiction.
Suppose that {\rm (B$\sp{\natural}$-EXC$_{\pm}$)}
fails for some $X, Y \in \mathcal{F}$,
and take such $(X,Y)$ with
$|X \Delta Y | = | X \setminus Y | + | Y \setminus X | $
minimum.
There exists $i \in X \setminus Y$ such that

\begin{quote}
(F) : \quad   (a) or (b)  fails for $(X,Y,i)$
\ \ (including: both (a) and (b) fail).
\end{quote}

By (B$\sp{\natural}$-EXC$_{\rm m}$)
there exists $J \subseteq Y \setminus X$ such that
$(X -i) \cup J \in \mathcal{F}$ and $(Y \setminus J) + i \in \mathcal{F}$.
Choose such $J$ with $|J|$ minimum.
If $|J| = 0$, we have (a-i) and (b-i), which contradicts (F).
If $|J| = 1$, say, $J = \{ j \}$, we have (a-ii) and (b-ii), which contradicts (F).

Suppose that $|J| \geq 2$.  
Take $j \in J$.

\begin{itemize}
\item
(B$\sp{\natural}$-EXC$_{\rm m}$) for $((X -i) \cup J, X, j)$ yields
\\ \  
[Case X0: $(X -i) \cup (J-j)  \in \mathcal{F}$ and $X  + j \in \mathcal{F}$]
or  
[Case X1: $X \cup (J-j)  \in \mathcal{F}$ and $X -i + j \in \mathcal{F}$].

\item
(B$\sp{\natural}$-EXC$_{\rm m}$) for $(Y, (Y \setminus J) + i,j)$ yields
\\  \ 
[Case Y0: $Y - j  \in \mathcal{F}$ and $Y \setminus (J-j) + i\in \mathcal{F}$]
or
[Case Y1: $Y + i - j  \in \mathcal{F}$ and $Y \setminus (J-j) \in \mathcal{F}$].
\end{itemize}

We have four cases to consider:
X0--Y0, X1--Y1, X1--Y0 and X0--Y1.

\begin{itemize}
\item
Case X0--Y0:
We have $(X -i) \cup (J-j)  \in \mathcal{F}$ and $Y \setminus (J-j) + i\in \mathcal{F}$, contradicting the minimality of $|J|$.

\item
Case X1--Y1:
We have $X -i + j \in \mathcal{F}$ and $Y + i - j  \in \mathcal{F}$,
and hence (a-ii) and (b-ii). 
A contradiction to (F).

\item
Case X1--Y0:
By Case X1, we have $X - i + j  \in \mathcal{F}$, i.e., (a-ii).
Let $X'=X \cup (J-j) \in \mathcal{F}$ and note that
$|X' \Delta Y | < |X \Delta Y |$ by $|J| \geq 2$.
Then $(X',Y)$ must satisfy {\rm (B$\sp{\natural}$-EXC$_{\pm}$)}.
Since $i \in X' \setminus Y$, 
{\rm (B$\sp{\natural}$-EXC$_{\pm}$)} for $(X',Y,i)$
shows that
(b-i)
$ Y + i \in \mathcal{F}$ 
\ or \ (b-ii) 
$ Y + i - k  \in \mathcal{F}$
for some $k \in Y \setminus X'$.
Since $k \in Y \setminus X' \subseteq Y \setminus X$,
this means that (b) holds for $(X,Y,i)$.
A contradiction to (F).

\item
Case X0--Y1:
(A similar argument as in Case X1--Y0):
By Case Y1, we have $Y + i - j  \in \mathcal{F}$, i.e., (b-ii).
Let $Y'=Y \setminus (J-j) \in \mathcal{F}$ and note that
$|X \Delta Y' | < |X \Delta Y |$ by $|J| \geq 2$.
Then $(X,Y')$ must satisfy {\rm (B$\sp{\natural}$-EXC$_{\pm}$)}.
Since $i \in X \setminus Y'$, 
{\rm (B$\sp{\natural}$-EXC$_{\pm}$)} for $(X,Y',i)$
shows that
(a-i)
$ X - i \in \mathcal{F}$ 
\ or \ (a-ii) 
$ X - i + k  \in \mathcal{F}$
for some $k \in Y' \setminus X$.
Since $k \in Y' \setminus X \subseteq Y \setminus X$,
this means that (a) holds for $(X,Y,i)$.
A contradiction to (F).
\end{itemize}
In all cases we have reached a contradiction, which was caused by 
our initial assumption that {\rm (B$\sp{\natural}$-EXC$_{\pm}$)}
fails for some $(X,Y)$.
Therefore, {\rm (B$\sp{\natural}$-EXC$_{\pm}$)} must be true for all $(X,Y)$.
\end{proof}

\subsection{Proof of Proposition \ref{PRmultTOusualDomCond}}
\label{SCmnatmTOmnat0}

To prove
``{\rm (M$\sp{\natural}$-EXC$_{\rm m}$)} $\Longrightarrow$ {\rm (M$\sp{\natural}$-EXC)}''
under the assumption of {\rm (B$\sp{\natural}$-EXC)} for $\dom f$,
we use the following local characterization  
of M$\sp{\natural}$-concavity
(\cite{Mdcasiam},  \cite{MS99gp}).

\begin{theorem}\label{THmconcavlocexc}
A set function  $f: 2\sp{N} \to \RR \cup \{ -\infty \}$ 
is M$\sp{\natural}$-concave
if and only if 
$\dom f$ satisfies {\rm (B$\sp{\natural}$-EXC)} and 
$f$ satisfies the following three conditions:
\begin{align} 
&
f( X + i + j ) + f( X ) \leq f(X + i) + f(X + j) 
\qquad 
(\forall X \subseteq N, \ \forall i,j \in N \setminus X, \  i \not= j),
\label{mnatconcavexc1loc}
\\
&
 f( X + i + j ) + f( X + k)  \leq 
\max\left[ f(X + i + k) + f(X + j),  f(X + j + k) + f(X + i) \right]  
\notag \\
& \hspace{0.5\textwidth}
 (\forall X \subseteq N, \ \forall i,j,k \, \mbox{\rm (distinct}) \in N \setminus X) ,
\label{mnatconcavexc2loc}
\\
&
 f( X + i + j ) + f( X + k + l)  
\leq 
\max\left[  f(X + i + k) + f(X + j +l ),  f(X + j + k) + f(X + i + l) \right]  
\notag \\
& \hspace{0.5\textwidth}
 (\forall X \subseteq N, \ \forall i,j,k,l \, \mbox{\rm (distinct}) \in N \setminus X).
\label{mnatconcavexc3loc}
\end{align}
\end{theorem}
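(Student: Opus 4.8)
\medskip
\noindent\emph{Proof proposal.}\quad
The two implications have very different character, so I would treat them separately, disposing of ``only if'' first since it is a routine specialization of the exchange property. Assume $f$ is M$\sp{\natural}$-concave. That $\dom f$ satisfies (B$\sp{\natural}$-EXC) is immediate: for $X,Y\in\dom f$ and $i\in X\setminus Y$ the right-hand side of (M$\sp{\natural}$-EXC) is at least $f(X)+f(Y)>-\infty$, so one of the pairs appearing there lies in $\dom f$. Inequality (\ref{mnatconcavexc1loc}) is exactly the instance of (M$\sp{\natural}$-EXC) for $(X+i+j,\ X,\ i)$, where the swap maximum is over the empty set, and (\ref{mnatconcavexc2loc}) is the instance for $(X+i+j,\ X+k,\ i)$, where $Y\setminus X=\{k\}$ is a singleton. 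The one point requiring an argument is (\ref{mnatconcavexc3loc}), which is slightly stronger than the instance of (M$\sp{\natural}$-EXC) for $(X+i+j,\ X+k+l,\ i)$ in that it omits the extra term $f(X+j)+f(X+i+k+l)$. Here I would argue by contradiction: if $f(X+i+j)+f(X+k+l)$ exceeded both $f(X+i+k)+f(X+j+l)$ and $f(X+i+l)+f(X+j+k)$, then the instances of (M$\sp{\natural}$-EXC) with pivots $i$ and $j$ would force $f(X+i+j)+f(X+k+l)\le f(X+j)+f(X+i+k+l)$ and $f(X+i+j)+f(X+k+l)\le f(X+i)+f(X+j+k+l)$; using (\ref{mnatconcavexc1loc}) to bound $f(X+i+k+l)\le f(X+i+k)+f(X+i+l)-f(X+i)$ and $f(X+j+k+l)\le f(X+j+k)+f(X+j+l)-f(X+j)$ and adding the two preceding inequalities, one gets $2(f(X+i+j)+f(X+k+l))\le(f(X+i+k)+f(X+j+l))+(f(X+i+l)+f(X+j+k))$, contradicting the assumption. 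The $-\infty$ bookkeeping is harmless, since every inequality used is either vacuous or between finite quantities.

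For the ``if'' direction --- the substantial half --- the plan is to derive (M$\sp{\natural}$-EXC) for every $X,Y$ and every $i\in X\setminus Y$ by induction on $m=|X\setminus Y|+|Y\setminus X|$. For a fixed instance we may assume, after restricting $f$ to the interval $[X\cap Y,\ X\cup Y]$, that $X$ and $Y$ partition $N$: both (B$\sp{\natural}$-EXC) for $\dom f$ and each of (\ref{mnatconcavexc1loc})--(\ref{mnatconcavexc3loc}) are inherited by such a restriction, and $m$ is unchanged. For small $m$ the desired inequality follows at once from one of (\ref{mnatconcavexc1loc})--(\ref{mnatconcavexc3loc}) --- for $(|X\setminus Y|,|Y\setminus X|)$ equal to $(2,0)$, $(2,1)$, $(2,2)$ --- or from a short combination of two of them after an elementary case split, as in the case $(3,0)$, which follows by adding two suitable instances of (\ref{mnatconcavexc1loc}), the split being taken on the sign of $f(X+i+j)+f(X+k)-f(X+i+k)-f(X+j)$ (here $i$ is the pivot and $j,k$ the other two elements). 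For the inductive step I would invoke (B$\sp{\natural}$-EXC) --- equivalently (B$\sp{\natural}$-EXC$_{\pm}$), since $\dom f$ is a generalized matroid by Proposition~\ref{PRgmattardos} --- to select an auxiliary element $j^{*}\ne i$ for which the set obtained from $X$ or from $Y$ by a single local move remains in $\dom f$; applying the induction hypothesis to the resulting pair, which has strictly smaller symmetric difference, and then closing the remaining gap by the corresponding instance of (\ref{mnatconcavexc1loc}), (\ref{mnatconcavexc2loc}) or (\ref{mnatconcavexc3loc}) --- possibly after one further subcase split --- would yield (M$\sp{\natural}$-EXC) for $(X,Y,i)$.

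I expect the inductive step of the ``if'' direction to be the real obstacle. The configurations must be organized so that, in each one, the auxiliary element furnished by (B$\sp{\natural}$-EXC) simultaneously keeps every intermediate set inside $\dom f$ --- so that the induction hypothesis returns non-vacuous information, which is exactly where the structural assumption on $\dom f$ is used rather than the inequalities on $f$, which hold trivially off $\dom f$ --- and leaves a residual inequality equal to one of the three local conditions applied to sets of $\dom f$. Determining which local condition is needed in which configuration, and how to choose $j^{*}$, is essentially the whole content of the theorem; by contrast, the ``only if'' direction and the reduction to an interval above are mechanical.
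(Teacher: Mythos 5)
First, note that the paper does not prove Theorem~\ref{THmconcavlocexc} at all: it is quoted as a known result from \cite{Mdcasiam} and \cite{MS99gp}, so there is no in-paper proof to compare against. Judged on its own merits, your ``only if'' direction is correct and complete. The derivations of (\ref{mnatconcavexc1loc}) and (\ref{mnatconcavexc2loc}) as direct instances of (M$\sp{\natural}$-EXC) are right, and your argument for (\ref{mnatconcavexc3loc}) is sound: if both swap terms fail, the instances of (M$\sp{\natural}$-EXC) with pivots $i$ and $j$ force the split-off alternatives $f(X+j)+f(X+i+k+l)$ and $f(X+i)+f(X+j+k+l)$ to be finite upper bounds, which in particular makes $f(X+i)$ and $f(X+j)$ finite, so the two applications of (\ref{mnatconcavexc1loc}) with base sets $X+i$ and $X+j$ can legitimately be subtracted and added to reach the contradiction $2(f(X+i+j)+f(X+k+l))\leq (f(X+i+k)+f(X+j+l))+(f(X+i+l)+f(X+j+k))$.

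The ``if'' direction, however, contains a genuine gap rather than a proof. You correctly identify the standard strategy (reduce to the interval $[X\cap Y, X\cup Y]$, induct on $|X\setminus Y|+|Y\setminus X|$, use (B$\sp{\natural}$-EXC) to keep intermediate sets in $\dom f$), and the base cases you list are fine. But the inductive step is stated only as an intention: ``invoke (B$\sp{\natural}$-EXC) to select an auxiliary element $j^{*}$ \ldots then closing the remaining gap by the corresponding instance \ldots would yield (M$\sp{\natural}$-EXC).'' You never specify how $j^{*}$ is chosen, which pair the induction hypothesis is applied to, or which local inequality closes which configuration --- and, as you yourself concede, ``determining which local condition is needed in which configuration, and how to choose $j^{*}$, is essentially the whole content of the theorem.'' That concession is accurate: the difficulty is precisely that a naive choice of $j^{*}$ can produce an intermediate pair for which the induction hypothesis returns an inequality that does not concatenate with any of (\ref{mnatconcavexc1loc})--(\ref{mnatconcavexc3loc}) to give the target, and the known proofs (Murota--Shioura \cite{MS99gp}) need a carefully organized case analysis, typically built around an extremal choice of the exchanged element. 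As written, the hard half of the theorem is asserted, not proved.
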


We shall derive 
 (\ref{mnatconcavexc1loc}), 
 (\ref{mnatconcavexc2loc}), and 
 (\ref{mnatconcavexc3loc}) 
from (M$\sp{\natural}$-EXC$_{\rm m}$).
First,
 (\ref{mnatconcavexc1loc}) follows from
(M$\sp{\natural}$-EXC$_{\rm m}$) applied to $(X+i+j, X, i)$,
where $J=\emptyset$ is the unique possibility.
Second, 
 (\ref{mnatconcavexc2loc}) follows from
(M$\sp{\natural}$-EXC$_{\rm m}$) applied to $(X+i+j, X+k, i)$,
where $J=\emptyset$ or $J= \{ k \}$ is possible.

To derive  (\ref{mnatconcavexc3loc}) we introduce notation
(M$\sp{\natural}$-EXC$_{\rm m}$$ (X,Y,I)$) to mean 
(M$\sp{\natural}$-EXC$_{\rm m}$) for $(X,Y,I)$, i.e., 
\begin{quote}
\mbox{\bf (M$\sp{\natural}$-EXC$_{\rm m}$$ (X,Y,I)$)}
For any $X, Y \subseteq N$ and $I \subseteq X \setminus Y$,
there exists $J \subseteq Y \setminus X$ such that
$f( X) + f( Y )   \leq  f((X \setminus I) \cup J) +f((Y \setminus J) \cup I) $
\end{quote}
with abbreviation of 
(M$\sp{\natural}$-EXC$_{\rm m}$$(X+i+j, X+k, i)$) 
to EXC$_{\rm m}$$(ij, k, i)$, etc.
We also use short-hand notation  $g(i)=f( X + i)$, 
$g(ij)=f( X + i + j )$ and $g(ijk)=f( X + i + j + k )$.
In (\ref{mnatconcavexc3loc}) we may assume $i=1$, $j=2$, $k=3$, $l=4$.
To prove
(\ref{mnatconcavexc3loc})
by contradiction, suppose that
(\ref{mnatconcavexc3loc}) fails, i.e.,
\begin{align}
\max\left[  g(13) + g(24),  g(23) + g(14) \right]  
<  g(12) + g(34) .
\label{mnatconcavexc3locFail}
\end{align}
Under the assumption (\ref{mnatconcavexc3locFail}), 
EXC$_{\rm m}$$(12, 34, 2)$ yields
\begin{align}
 g(12) + g(34) \leq \max\left[  g(1) + g(234),  g(2) + g(134) \right]  .
\label{1234leq}
\end{align}
Without loss of generality, we may assume
\[ 
 \underline{  g(2) + g(134) } \leq g(1) + g(234).
\]
EXC$_{\rm m}$$(234, 1, 4)$ yields
\[
 g(1) + g(234) \leq \max\left[  g(23) + g(14),  g(4) + g(123) \right]  
 =   g(4) + g(123).
\]
EXC$_{\rm m}$$(123, 4, 2)$ yields
\[
 g(4) + g(123) \leq \max\left[  g(24) + g(13),  g(2) + g(134) \right]  
 = \underline{  g(2) + g(134) }.
\]
Therefore,
\[
g(2) + g(134) = g(1) + g(234) = g(4) + g(123).
\]
Since we have symmetry $3 \leftrightarrow 4$, we obtain
\begin{equation}
g(1) + g(234) =  
g(2) + g(134) = 
g(3) + g(124) = 
g(4) + g(123) =: \alpha .
\label{mnatconcavexc1234alpha}
\end{equation}
By (\ref{mnatconcavexc3locFail}) and  (\ref{1234leq}) we have
\begin{align}
\max\left[  g(13) + g(24),  g(23) + g(14) \right]  
<  g(12) + g(34) \leq \alpha .
\label{mnatconcavexc3locFail2}
\end{align}
EXC$_{\rm m}$$(123, 1, 3)$, 
EXC$_{\rm m}$$(124, 2, 4)$, 
EXC$_{\rm m}$$(234, 3, 4)$, and
EXC$_{\rm m}$$(134, 4, 3)$
 yield, respectively,
\begin{align*}
  g(1) + g(123) &\leq   g(13) + g(12),
\\
  g(2) + g(124) &\leq   g(12) + g(24),
\\
  g(3) + g(234) &\leq   g(34) + g(23),
\\
  g(4) + g(134) &\leq   g(14) + g(34).
\end{align*}
By adding these four inequalities
and using (\ref{mnatconcavexc1234alpha})
we obtain
\begin{equation} \label{4a3matching}
 4 \alpha \leq 2 [g(12) + g(34) ] 
 + [ g(13) + g(24) ]  +  [ g(14) + g(23) ].
\end{equation}
This contradicts 
(\ref{mnatconcavexc3locFail2}).
Thus (\ref{mnatconcavexc3loc}) is proved.

\section*{Acknowledgements}

The author thanks Akiyoshi Shioura, Akihisa Tamura and Yu Yokoi 
for discussion and comments.
This work was supported by The Mitsubishi Foundation,   CREST, JST, 
and JSPS KAKENHI Grant Number 26280004.


\begin{thebibliography}{99}

\bibitem{Bry73} 
T. H. Brylawski:
Some properties of basic families of subsets,
{\em Discrete Mathematics}, {\bf 6} (1973), 333--341


\bibitem{DKM01} 
V. Danilov, G. Koshevoy, and K. Murota:
Discrete convexity and equilibria in economies with indivisible
goods and money,
{\em Mathematical Social Sciences},
{\bf 41} (2001), 251--273. 



\bibitem{DW90} 
{ A. W. M. Dress and W. Wenzel}:
Valuated matroid: A new look at the greedy algorithm,
{\em Applied Mathematics Letters}, {\bf 3} (1990), 33--35.


\bibitem{DW92} 
{ A. W. M. Dress and W. Wenzel}:
Valuated matroids,
{\em Advances in Mathematics}, {\bf 93} (1992), 214--250.



\bibitem{Fuj05} 
S. Fujishige:
{\em Submodular Functions and Optimization},
2nd ed., 
Elsevier, Amsterdam,  2005.


\bibitem{FujTam07} 
S. Fujishige and A. Tamura:
A two-sided discrete-concave market with possibly bounded side payments: 
An approach by discrete convex analysis,
{\em Mathematics of Operations Research}, {\bf 32} (2007), 136--155.



\bibitem{FY03}
S. Fujishige and Z. Yang:
A note on Kelso and Crawford's gross substitutes condition,
{\em Mathematics of Operations Research},
{\bf 28} (2003), 463--469.


\bibitem{Gre73} 
C. Greene:
A multiple exchange property for bases,
{\em Proc.~American Mathematical Society} {\bf 39} (1973), 45--50. 



\bibitem{GS99} 
F. Gul and E. Stacchetti:
Walrasian equilibrium with gross substitutes,
{\em Journal of Economic Theory},
{\bf 87} (1999), 95--124.


\bibitem{ISST15} 
Y. T. Ikebe,   Y. Sekiguchi, A. Shioura and, A. Tamura:
Stability and competitive equilibria in multi-unit
trading networks with discrete concave utility functions,
{\em Japan Journal of Industrial and Applied Mathematics},  
{\bf 32} (2015), 373--410.


\bibitem{KC82} 
A. S. Kelso, Jr., and V. P. Crawford:
Job matching, coalition formation, and gross substitutes,
{\em Econometrica}, {\bf 50} (1982), 1483--1504.


\bibitem{KTY14} 
F. Kojima, A. Tamura and M. Yokoo: 
Designing matching mechanisms under constraints: 
An approach from discrete convex analysis.
The 7th International Symposium on Algorithmic Game Theory,
Patras, 2014;
https://mpra.ub.uni-muenchen.de/56189/




\bibitem{Kun86b} 
{J. P. S. Kung}:
{Basis-exchange properties},
in: N. White, ed., 
 {\em Theory of Matroids},
 Cambridge University Press,  London, 1986,
 Chapter 4,  62--75.



\bibitem{McDia75} 
 C. J. H. McDiarmid:
An exchange theorem for independence structures,
{\em Proc.~American Mathematical Society} {\bf 47} (1975), 513--514. 


\bibitem{Mspr2000}
{K. Murota}:
 {\em Matrices and Matroids for Systems Analysis},
 Springer, Berlin,  2000.



\bibitem{Mdcasiam}
K. Murota: 
{\em Discrete Convex Analysis},
SIAM, Philadelphia, 2003.



\bibitem{Mbonn09}
K. Murota.
Recent developments in discrete convex analysis,
in: W. J. Cook, L. Lov\'asz, and J. Vygen (eds.),
{\em Research Trends in Combinatorial Optimization},  
 Springer, Berlin, 2009, pp.~219--260.



\bibitem{Mrims10} 
K. Murota:
Submodular function minimization and maximization in discrete convex analysis,
{\em RIMS Kokyuroku Bessatsu}, 
{\bf B23} (2010), 193--211.



\bibitem{MS99gp} 
{K. Murota and A. Shioura}:
M-convex function on generalized polymatroid,
{\em Mathematics of Operations Research}, 
{\bf 24} (1999), 95--105.



\bibitem{MTcompeq03} 
K. Murota and A. Tamura:
Application of M-convex submodular flow problem
to mathematical economics,
{\em Japan Journal of Industrial and Applied Mathematics},
{\bf 20} (2003),  257--277.

\bibitem{Sch03} 
{A. Schrijver}:
 {\em Combinatorial Optimization---Polyhedra and Efficiency},
Springer, Heidelberg, 2003.


\bibitem{ST15jorsj} 
A. Shioura and A. Tamura: 
Gross substitutes condition and discrete concavity
for multi-unit valuations: a survey,
{\em Journal of Operations Research Society of Japan},
{\bf 58} (2015), 61--103.


\bibitem{SCB14}
D. Simchi-Levi,  X. Chen and J. Bramel:
{\em The Logic of Logistics: Theory, Algorithms, and Applications for Logistics Management}, 3rd ed.
Springer, New York, 2014.




\bibitem{Tar85}
\'E. Tardos:
Generalized matroids and supermodular colourings,
in: A. Recski and L. Lov\'{a}sz (eds.), 
{\em Matroid Theory}, Colloquia Mathematica Societatis J\'{a}nos Bolyai, {\bf 40},
North-Holland, Amsterdam, 1985, 359--382. 


\bibitem{Woo74} 
D. R. Woodall:
An exchange theorem for bases of matroids,
{\em Journal of Combinatorial Theory}, 
{\bf B16} (1974), 227--228.


\end{thebibliography}
\end{document}